\numberwithin{equation}{section}
\def\sI{\mathscr{I}}
\def\sJ{\mathscr{J}}
\def\cS{\mathcal{S}}\def\cH{\mathcal{H}}
\def\bC{\mathbb{C}}
\def\bP{\mathbb{P}}
\def\fm{\mathfrak{m}}
\def\tDet{\mathrm{Det}}
\def\tgeneral{\mathrm{general}}
\def\tGL{\mathrm{GL}}
\def\tHilb{\mathit{Hilb}}
\def\tId{\mathrm{Id}}
\def\tker{\mathit{ker}}
\def\bm{\mathbf{m}}
\def\tPrm{\mathrm{Perm}}
\def\tmult{\mathrm{mult}}
\def\tZeros{\mathit{Zeros}}
\def\half{\tfrac12}
\def\td{\mathrm{d}}
\def\w{\omega}
\def\N{{\hbox{\tiny{\textsf{N}}}}}
\def\uN{\underline{e}_\N}
\def\ule{\underline{e}}
\def\lefthook{\hbox{\small{$\lrcorner\, $}}}
\def\tsing{\mathrm{sing}}
\def\be{\begin{equation}}
\def\ene{\end{equation}}
\def\intprod{\negthinspace
\mathbin{\raisebox{.4ex}{\hbox{\vrule height .5pt width 5pt depth 0pt %
        \vrule height 3pt width .5pt depth 0pt}}}}
\def\t{\tau}
\def\frp#1{\frac{\partial}{\partial{#1}}}
\def\ooo#1#2{\omega^{#1}_{#2}}
\def\oo#1{\omega^{#1}_0}
\def\cf{\mathcal F}
\def\BC{\mathbb C}
\def\BP{\mathbb P}
\def\pp#1{\mathbb P^{#1}}
\def\fgl{\mathfrak g\mathfrak l}
\def\pp#1{{\mathbb P}^{#1}}
\def\tdim{\mathrm{dim}}
\def\hd{,...,}
\def\upperp{{}^\perp}
\def\cU{\mathcal U}
\def\cF{{\mathcal F}}
\def\cM{{\mathcal M}}
\def\cS{{\mathcal S}}
\def\cC{{\mathcal C}}
\def\11{\mathbf 1}
\def\FS{{\mathfrak S}}
\def\a{\alpha}
\def\o{\omega}
\def\upperp{{}^\perp}
\def\b{\beta}
\def\d{\delta}
\def\ot{{\mathord{\,\otimes }\,}}
\def\op{{\mathord{\,\oplus }\,}}
\def\ra{{\mathord{\;\rightarrow\;}}}
\def\La#1{\Lambda^{#1}} 
\def\tdeg{\mathit{deg} }
\def\tdim{\text{dim}\,}
\def\tker{\text{ker}\,}
\def\trank{\mathrm{rank}\,}
\def\mpg{\cM_d^{p ,G}}\def\mp{\cM_d^{p}}
\def\xpg{\bold\Xi^{p ,G}_d}\def\xp {\bold\Xi^{p }_d}
\def\ppg{\bold\Phi^{p ,G}_d}\def\ppx{\bold\Phi^{p }_d}
\newtheorem*{prop*}{Proposition}
\newtheorem{theorem}{Theorem}[section]
\newtheorem{proposition}[theorem]{Proposition}
\newtheorem{lemma}[theorem]{Lemma}
\theoremstyle{definition}
\newtheorem{definition}[theorem]{Definition}
\theoremstyle{remark}
\newtheorem{remark}[theorem]{Remark}
\newcounter{Lcnt}
\begin{document}

\address{Department of Mathematics\\Texas A\&M University\\
College Station, TX  77843}
\email{jml@math.tamu.edu, robles@math.tamu.edu}

\title[Lines and osculating lines of hypersurfaces]{Lines and osculating lines of hypersurfaces}
\author{J.M. Landsberg \and C. Robles}
\date{\today}
\begin{abstract}This is a detailed study of the infinitesimal
variation of the varieties of lines and osculating lines through 
a point of a low degree hypersurface in projective space.   The motion is governed by a system of partial differential
equations which we describe explicitly.
\end{abstract}
%\thanks{Supported respectively by NSF grant  DMS-0305829}

\maketitle

%---------------------------------------------------
\section{Introduction}
%---------------------------------------------------

%---------------------------------------------------
\subsection{Motivation and context}
%---------------------------------------------------

Let   $X^n\subset  \BC\pp{n+1}$ be a  hypersurface with $\tdeg(X) \leq n$.  Given a   point $x\in X$, let $ \cC_x\subset \BP T_xX$ denote the tangent directions to lines $\ell = \bP^1$ such that $x \in \ell \subset X$.  If $\tdeg(X) \leq 2$ and $x,y\in X$ are general points, then $\cC_x$ is projectively isomorphic to $\cC_y$.   
The goal of this paper is to answer a question posed by Jun-Muk Hwang:
\begin{quote}
{ Let $\tdeg(X) \geq 3$. 
  How can $\cC_x$ vary (modulo projective isomorphism) as 
  $x$ varies over the general points of $X$?}
\end{quote}
The question is motivated by Hwang and N. Mok's program to 
study Fano varieties  via  the variety of tangent directions to minimal degree rational
curves through a   point.  (See \cite{Hwang_Trieste} for an overview.)

The first interesting case is $d=3$ and $n=4$.  Here $\cC_x$ is a curve of degree six
in $\pp 3$ -- i.e., a genus four curve in its canonical embedding.
When $d=3$ and $n=5$, $\cC$ is a $K3$ surface in $\pp 4$.  

In another direction, this paper is a continuation of a program to understand the relationship
between the algebraic geometry of subvarieties of projective space and their local projective
differential geometry.
This was a project of the classical   geometers (e.g. \cite{Fubini1916,Severi,MR1190006}), was revived by Griffiths and Harris in \cite{GH} 
and continued in \cite{JenMus,Lline,MR2011327,Lsnu,Lrigid,R_SL,LRfub,MR2136366,MR2014407,MR1876644}, for example. 

\smallskip

Before addressing Hwang's question, it will be useful to  generalize it as follows: 
Fix $d\leq n$ and
let $X^n\subset\pp{n+1}$ be a variety of degree at least $d$.  
Given $x\in X $, let 
$\cC_{d,x}\subset \BP T_xX$ denote the {\it tangent directions to lines osculating
to order $d$ with $X$ at $x$}. (See \S\ref{sec:frames} for a precise definition and a discussion  of osculation).  If $d=\tdeg(X)$, then $\cC_{d,x} = \cC_x$. 
For sufficiently general $X$ of degree at least $d$ and general $x\in X$, $\cC_{d,x}\subset \BP T_xX$ is
the transverse intersection of smooth  hypersurfaces   of degrees $2\hd d$. In particular it has codimension $d-1$ in $\BP T_xX$.   

%---------------------------------------------------
\subsection{The results}
%---------------------------------------------------

Our results are technical and require substantial notation to state precisely.  
  Here we present central ideas and roughly state the results.  
The terminology   will be made precise in Sections \ref{sec:frames} and \ref{sec:frame_thms}.

Let $X\subset \pp{n+1}$ be a hypersurface of degree at least $d$.  
We say $x\in X$ is a \emph{$d$-general point} if    
all discrete projective differential invariants up to order $d$ are constant
on a Zariski open subset of $X$ containing $x$. Let $p$ denote the Hilbert polynomial
of $\cC_{d,x}\subset \BP T_xX$. 
Let $U\subset X$ be a neighborhood of $x$ that admits a local framing 
(cf. \S\ref{sec:F1}) consisting of  $d$-general points. 
A choice of a local framing $e$ yields, for all $y\in U$, polynomials
$(F_{2,e,y}\hd F_{d,e,y})$, with $F_{\d,e,y}\in S^{\d}T^*_yX$.  While the polynomials depend on our choice of frame, their zero set in $\BP T_yX$ does not; that variety is $\cC_{d,y} \subset \bP T_yX$.  The framing enables us to
identify the tangent spaces at different points with a fixed complex vector space $T$ of dimension $n$.  Then we may regard these polynomials as elements of $S^\d T^*$.  This determines a map
$$
  \hat\phi_{d,e}: U\ra \tHilb_{p } \, , 
$$ 
where $\tHilb_p$ is the Hilbert scheme of
subvarieties of $\BP T=\pp{n-1}$ having Hilbert polynomial $p$.

In order to eliminate the ambiguity in our choice of framing (which determines the identification of $T_xX$ with $T$) we wish to quotient the image by the action of $GL(T)$.  The quotient of $\tHilb_p$ by $GL(T)$ is not a manifold or algebraic variety. However, the quotient
of $\hat\phi_{d,e}(U)\subset \tHilb_p$ will usually be a manifold. For example, if $X$ is sufficiently general,
  the stabilizer
in $GL(T)$ of a point in $\hat\phi_{d,e}(U)$ will be trivial. For simplicity of discussion, for
the moment assume this is the case. Let $\tHilb_p^0$ denote the open subset of $\tHilb_p$ where
the stabilizer in $GL(T)$ is trivial and such that $\tHilb_p^0$ contains $\hat\phi_{d,e}(U)$
and is a manifold.  Set 
$$ 
  \cM^p_d \ = \ \tHilb_p^0 /  GL(T) \, .
$$
which by definition is a manifold.
We obtain a well defined map
$$
  \phi_{U,d} : U\ra \mp  \, .
$$
This map is independent of choice of local framing, so it extends to a well-defined map
\be\label{phiddef}
\phi_d : X_{\tgeneral}\ra\mp\, .
\ene

For most $X$ the Hilbert polynomial $p$ will be that of a generic complete intersection of type $(2\hd d)$ in $\pp{n-1}$, and the stabilizer $G$ will be trivial.  (The Hilbert polynomial is that of a generic complete
intersection if and only if the polynomials $F_{2,e,x}\hd F_{d,e,x}$ have no non-trivial syzygies.)    We denote this generic moduli space by $\cM_d$.  Describing $\td \phi_d$
in the case that $d$ is the degree of $X$ answers Hwang's question. 
We show 
\begin{list}{}{ \usecounter{Lcnt}
 \setlength{\leftmargin}{40pt} \setlength{\itemindent}{25pt}
 \setlength{\labelwidth}{40pt} }
%
%\item[(Theorem \ref{^^^})]
%For $d>2$ and a general hypersurface  $X$ of degree at least $d$, the differential $\td %\phi_d$ will have maximal rank $n$.
%As long as $I_3(F_{2,e,x},F_{3,e,x})\subset S^3T^*$ does not contain a cone   the differential $\td\phi_d$ must be of maximal rank $n$.
%
\item[(Theorem \ref{phixthmb}.a)]  The image
of $\phi_d$  satisfies a first-order system of
partial differential equations. The system is expressed by the condition that the Gauss map $\gamma_{\phi_d}$ associated to $\phi_d$ takes image in a proper
subvariety $ \xp $ of the Grassmann bundle of $\mp $.  We give an explicit description of $ \xp $ in \S\ref{sec:quotient}. 
\item[(Theorem \ref{phixthmb}.b)]  When $\tdeg (X)=d$ (so that $\cC_{x,d}=\cC_x$) the image of $\phi_d$
satisfies a more restrictive first-order system of partial differential
equations: the Gauss image must lie in a smaller subvariety
$\ppx\subset \xp $. We give an explicit description of $\ppx$ in \S\ref{sec:quotient}. 
\item[(Theorem \ref{thm:deg=d})]  Suppose that $\tdeg(X) \geq d$ and that the Gauss image
of $\phi_d(X_{\tgeneral})$ lies in $\ppx$.  Then, under mild genericity conditions on $X$, we may conclude that $\tdeg(X)=d$. (More precisely,  it is sufficent to assume that either $X$ is smooth or that
for a general line $\ell\subset X$, $\ell\cap X_{\tsing}=\emptyset$.)
\item[(Theorem \ref{forcedmove})]    There is a non-empty
Zariski open subset $A_d $ of $\cM_d $, such that for any $X$ with 
$\tdeg(X)\geq d\geq 4$ and $x\in X_{\tgeneral}$ with $\phi(x)\in A_d $,
$\trank(\td \phi_d|_x)=n$.
\end{list}
Theorem \ref{forcedmove} is suprising as it gives conditions that imply
that $\cC_x$ \lq\lq must move as much as possible\rq\rq\ without any hypotheses on $X$.

In the special case that $d=3$, Theorem \ref{phixthmb}.b is related to a classical result of Fubini and
Cartan.  Fubini
\cite{Fubini1916} stated (and Cartan \cite{Cartan_projDef} gave a rigorous proof) that
under the hypotheses of the theorem, $X$ is projectively determined by $\phi_3(U)$, $U\subset X$ any open subset in the analytic topology.
For a short, modern proof, see   \cite{JenMus}. 

\begin{remark} In \cite{JenMus}    they also extend the Cartan-Fubini theorem when $n\geq 3$
to the case that the second fundamental form is degenerate.
The result fails in the case of surfaces, see \cite{Cartan_projDef}, though it still
holds \lq\lq generically.\rq\rq
\end{remark}

\medskip

The results above are consequences of Theorems \ref{phixthm} and \ref{fullrank1} on  the Fubini forms (cf. \S\ref{sec:fubini}) of the hypersurface $X$.
More precisely the $\cC_{d,x}$ do not contain all the geometric information of
$X$ accessible by $d$ derivatives at a point. This information  is contained
in the Fubini forms $(F_{2,e,x}\hd F_{d,e,x})$
 -- cf.  \S\ref{sec:frame_thms}, with $F_{\d,e,x} \in S^\d T^*$.
  (The exception is the case $d=3$, where $\cC_x$ contains all the information of the Fubini forms.)  The collection of Fubini forms are not well-defined
at $x \in X$, but do define an equivalence class under the action of
a group $H$ of dimension $n^2+2n+3$, see \S\ref{sec:sharper}. 
 
We will first establish results in a fixed   local framing $e$ which fixes a choice of Fubini forms.    In particular, we have a map
\be\label{tildephidedef}
  \tilde\phi_{d,e} : U \ \ra \ \oplus_{\d=2}^dS^{\d}T^* \, .
\ene
We determine a first-order system of partial differential equations that $\tilde\phi_{d,e}$ must satisfy (Theorem \ref{phixthm}).  We   quotient to the Hilbert scheme and then by the action of $GL(T)$ to obtain the choice-free results listed above.  This is carried out in Section \ref{sec:quotient}.

Several examples of cubic hypersurfaces are considered in \S\ref{sec:examples}.

%---------------------------------------------------------
\section{Local frames and Fubini forms}\label{sec:frames}
%---------------------------------------------------------

%---------------------------------------------------
\subsection{Notational conventions}
%---------------------------------------------------
For subsets 
$X\subset \BP V$, $\widehat X\subset V$ denotes
the corresponding cone.  For a submanifold $X\subset \BP V$ and $x \in X$,
$\widehat T_xX   \subset V$ denotes its \emph{affine tangent
space}.  The \emph{tangent space} and \emph{normal space} at $x$ are  
$$
  T_xX \ = \ {\hat x}^* \ot (\widehat T_{x}  X / {\hat x})
  \quad \hbox{ and } \quad 
  N_x X = T_x\bP V/T_xX = {\hat x}^* \ot (V/\widehat T_{x} X) \, ,
$$
respectively.

We will use the following index ranges:
\begin{align*}
&0\leq j,k,\ell\leq n+1 \, , \\
&1\leq a,b,a_j\leq n \, , \\
&\textsf{N}=n+1 \, .
\end{align*}
The linear span of vectors $\{v_1\hd v_k\}$ is denoted $\langle v_1\hd v_k \rangle$.

%---------------------------------------------------
\subsection{Adapted frames}\label{sec:F1}
%---------------------------------------------------
Let $\pi : \BC^{n+2}\backslash 0\ra \pp{n+1}$ denote the natural projection $v \mapsto [v]$. 
Let 
$X^n\subset\pp{n+1}$ be a submanifold and let $\cF^1\ra X$ denote
the bundle of first-order adapted frames.  Elements of $\cF^1 \subset \tGL(V)$ are frames (or bases) $e = (e_0,e_1,\ldots,e_\N)$ of $V=\BC^{n+2}$, such that 
\begin{eqnarray*}
  \pi(e_0) & \in & X \, , \\
  \widehat T_{[e_0]}  X & = & \langle e_0 , \ldots , e_n \rangle \, .
\end{eqnarray*}
A \emph{local, first-order adapted framing} is a section $e : U \to \cF^1$, $U \subset X$ open in the analytic topology.  Every smooth point on a projective variety admits a neighborhood $U$ with local framing (\S\ref{sec:coords}).

Since $X$ is a hypersurface, $N_xX$ is a line bundle spanned by 
\begin{displaymath}
  \uN := e^0 \ot ( e_\N \hbox{ mod }  \widehat T_{e_0}   X ) \, .
\end{displaymath}
Here  $(e^0,\ldots, e^\N)$ is the basis of $V^*$ dual to $e$.

Define the $\fgl(V)$-valued Maurer-Cartan form $\w = \w^j_k e_j\ot e^k$ on
$GL(V)$ by $\td e_j = \w^k_j \, e_k$. Recall the Maurer-Cartan equation:
$$
  \td \, \w^j_k \ = \ -\w^j_\ell \wedge \w^\ell_k \, .
$$
We abuse notation and denote the pullback
of $\w$ to $\cf^1$ by $\o$ as well.    
%---------------------------------------------------
\subsection{Fubini forms}\label{sec:fubini}
%---------------------------------------------------
The variety $\cC_{k,x}$ will be defined (Definition \ref{dfn:C}) as the zero set of the Fubini forms $(F_{2,e,x}\hd F_{k,e,x})$.  We review Fubini forms here; see \cite[Ch. 3]{IL} for details.

Since $\widehat T_{[e_0]}  X = \langle e_0 , e_1 , \ldots , e_{n } \rangle$ we have $\w^\N_0 = 0$ on $\cf^1$.  Differentiating this equation, and an application of the Cartan lemma (see, e.g.  \cite[p. 314]{IL}), yields
functions $r_{ab} = r_{ba} : \cF^1 \to \bC$ such that
\begin{equation}\label{eqn:F2}
  \w^{\N}_a \ = \ r_{ab} \, \w^b_0 \ , \qquad 
  1 \le a , b \le n  \, .
\end{equation}
The coefficients $r_{ab}$ define the Fubini quadric (also known as
the projective second fundamental form) $F_2 = r_{ab} \,\ule^a \ule^b \ot \uN\in \Gamma(\cF^1,\pi^*(S^2T^* X \ot N X))$.  Here the $\ule^a \in T^*_x X$ are dual to the basis 
\begin{displaymath}
  \ule_a = e^0 \ot ( e_a \ \mathrm{mod} \ e_0 ) 
\end{displaymath}
of $T_xX$ and $x=\pi(e_0)$.

The coefficients of the Fubini cubic are obtained by differentiating \eqref{eqn:F2} and another application of Cartan's Lemma.  The coefficients $r_{a_1a_2\cdots a_p}$ of the $p$-th Fubini form 
$
  F_p = r_{a_1\cdots a_p} \, \ule^{a_1} \cdots \ule^{a_p} \ot \uN \in \Gamma(\cF^1,\pi^*(S^pT^* X \ot N X))
$
are defined inductively.  The defining formula is as follows.
Let $\FS_{p+q}$ denote the symmetric group on $p+q$ letters.  Given two tensors $T_{a_1 \ldots a_p}$ and $U_{a_{p+1} \ldots a_{p+q}}$,   let
$$
  T_{(a_1 \ldots a_p} U_{a_{p+1} \ldots a_{p+q})} \ = \ 
  \frac{1}{(p+q)!} \sum_{\sigma\in \FS_{p+q}} \, 
  T_{\sigma(a_1) \ldots \sigma(a_p)} U_{\sigma(a_{p+1}) \ldots \sigma(a_{p+q})}
$$ 
denote the {\it symmetrization} of their product.  For example, $T_{(a_1} U_{a_2)} = \half( T_{a_1} U_{a_2} + T_{a_2} U_{a_1})$.  We exclude from the symmetrization operation any index that is outside the parethenses.  For example, in $r_{b(a_1 \ldots a_{p-1}} \, \w^b_{a_p)}$ we symmetrize over only the $a_i$, excluding the $b$ index.
Define $$ r_a = 0 \, . $$
\begin{prop*}[\cite{R_SL}]
\label{prop:F}
The coefficients of $F_{p+1}$, $p>1$,   fully
symmetric in their lower indices, are defined by
\begin{eqnarray*}
  r_{a_1 \ldots a_p b} \, \w^b_0 & = &
  - \, \td r_{a_1 \ldots a_p} \, - \, r_{a_1 \ldots a_p} \,
  \left\{ (p-1) \,  \w^0_0 \, + \, \w^\N_\N \right\} \hfill \\
  & &
  + \ p \,  
  \left\{ (p-2) \, r_{(a_1 \ldots a_{p-1}} \w^0_{a_p)} \,  
       + \, r_{b(a_1 \ldots a_{p-1}} \, \w^b_{a_p)}  \right\} \\
  & & 
  - \ \sum_{j=1}^{p-2} \, \binom{p}{j} \, 
      \bigg\{ 
      \ r_{b(a_1 \ldots a_j} \, r_{a_{j+1} \ldots a_p)} \, \w^b_\N \ 
      + \ (j-1)\, r_{(a_1 \ldots a_j} \, r_{a_{j+1} \ldots a_p)} \, 
                 \w^0_\N \, \bigg\} \, . 
\end{eqnarray*}
\end{prop*}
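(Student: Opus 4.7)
The plan is to prove the identity by induction on $p \ge 2$. The base case $p=2$ is handled by exterior differentiating the defining relation $\w^\N_a = r_{ab}\,\w^b_0$ of \eqref{eqn:F2}. Using $\w^\N_0 = 0$, the Maurer-Cartan expansions of $\td \w^\N_a$ and $\td \w^b_0$, and back-substituting $\w^\N_c = r_{cd}\w^d_0$, one rewrites the resulting 2-form identity as $\bigl[\cdots\bigr]\wedge \w^b_0 = 0$ and applies the Cartan lemma to obtain a symmetric $r_{a_1 a_2 b}$. Since $r_a=0$ and the $j$-sum is empty at $p=2$, the identity reduces to $r_{a_1 a_2 b}\,\w^b_0 = -\td r_{a_1 a_2} - r_{a_1 a_2}(\w^0_0 + \w^\N_\N) + 2\,r_{b(a_1}\w^b_{a_2)}$, matching the proposition.

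For the inductive step, assume the identity at level $p$. I would exterior differentiate $r_{a_1 \ldots a_p b}\,\w^b_0 = \mathrm{RHS}_p$. On the left, $\td \w^b_0 = -\w^b_j\wedge \w^j_0$ together with $\w^\N_0 = 0$ yields $\td r_{a_1\ldots a_p b}\wedge \w^b_0 + (\text{semibasic quadratic terms in the $r$'s})$. On the right, $\td(-\td r_{a_1\ldots a_p}) = 0$ while Maurer-Cartan replaces each remaining $\td \w^j_k$, after which $\w^\N_c = r_{cd}\w^d_0$ is substituted wherever it appears. Upon collection, the identity reads $\bigl[\td r_{a_1\ldots a_p b} - \Phi_{a_1\ldots a_p b}\bigr]\wedge \w^b_0 = 0$ for an explicit polynomial $\Phi$ in connection forms and lower-order $r$'s. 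The Cartan lemma yields $r_{a_1\ldots a_p b c}$ symmetric in $(b,c)$; the symmetry in $a_1,\ldots,a_p$ is inherited from the identity and upgrades this to full symmetry in all lower indices. Identifying the resulting expression against $\mathrm{RHS}_{p+1}$ closes the induction.

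The combinatorially delicate piece is the emergence of the binomial $\binom{p+1}{j}$ at the next level. When the quadratic term $\binom{p}{j}\,r_{b(a_1\ldots a_j}\,r_{a_{j+1}\ldots a_p)}\w^b_\N$ in $\mathrm{RHS}_p$ is differentiated, the product rule acts on each $r$-factor separately; after resymmetrizing in the fresh index $a_{p+1}$, the two contributions combine via Pascal's identity $\binom{p}{j-1}+\binom{p}{j}=\binom{p+1}{j}$ to produce the $j$-th coefficient at the next level. The extremal cases $j=1$ and $j=p-1$ are supplied by the linear pieces $p\bigl\{(p-2)r_{(a_1\ldots a_{p-1}}\w^0_{a_p)} + r_{b(a_1\ldots a_{p-1}}\w^b_{a_p)}\bigr\}$ in $\mathrm{RHS}_p$. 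The prefactor $(p-1)\w^0_0 + \w^\N_\N$ encodes the conformal weight of $F_p \in \pi^*(S^pT^*X\ot NX)$ and increments by $\w^0_0$ at each step.

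The main obstacle is pure combinatorial bookkeeping. A connection form such as $\w^b_{a_p}$ already carries a current $a$-index that must, at the next level, be re-symmetrized with a fresh $a_{p+1}$; multiple substitutions contribute to the same coefficient, so a single miscounted binomial or symmetrization factor propagates through the rest of the identity. The heart of the verification -- and where the bulk of the effort lies -- is the term-by-term matching, scalar, linear, and quadratic in the $r$'s, between the computed $\Phi_{a_1\ldots a_p b}$ and $\mathrm{RHS}_{p+1}$.
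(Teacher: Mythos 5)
Your strategy---exterior differentiation of the level-$p$ identity, Maurer--Cartan expansion, back-substitution of $\w^\N_a = r_{ab}\w^b_0$, and the Cartan lemma---is exactly the standard derivation of the Fubini forms; the paper itself gives no proof but cites \cite{R_SL}, where this is precisely the argument, and your base case $p=2$ is handled correctly. Be aware, though, that what you have written is a plan rather than a proof: the entire content of the proposition is the explicit coefficient matching that you defer (``the term-by-term matching \ldots is where the bulk of the effort lies''), and at least one of your bookkeeping claims is already imprecise --- the $j=1$ quadratic term at level $p+1$, namely $r_{b(a_1}r_{a_2\ldots a_{p+1})}\w^b_\N$, arises from differentiating the weight term $r_{a_1\ldots a_p}\w^\N_\N$ via $\td\w^\N_\N = -\w^\N_b\wedge\w^b_\N - \cdots$ and substituting $\w^\N_b = r_{bc}\w^c_0$, not from the ``linear pieces'' $\w^0_{a_p}$, $\w^b_{a_p}$ (those supply the $j=p-1$ terms). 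Until the collected $2$-form identity is actually reduced to $[\,\cdots\,]\wedge\w^b_0=0$ and each coefficient is checked against $\mathrm{RHS}_{p+1}$, the induction is not closed.
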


The Fubini forms $F_p$ are defined on $\cF^1$ and do not descend to well-defined sections of $S^pT^*X\ot NX$ over $X$.  However, the ideal generated by $\{ F_2 , F_3 , \ldots , F_k \}$ is independent of our choice of first-order adapted frame over $x\in X$.  

\begin{proposition}\label{easyframe} Given $X^n\subset\pp{n+1}$ and a smooth point $x\in X$, there exists a local framing about $x$ such that all entries  of the Maurer-Cartan form (pulled back via this framing) vanish with the exception of $\oo a$ and $\ooo Na$, $1\leq a\leq n$.
\end{proposition}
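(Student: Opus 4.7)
The plan is to write down the framing explicitly from a Monge (graph) normal form at $x$. Because $x$ is a smooth point of $X$, I may choose affine coordinates on $V = \BC^{n+2}$ so that $x = [1:0:\cdots:0]$, the affine tangent hyperplane $\widehat{T}_xX$ is cut out by $z^{\N} = 0$ in the chart $\{z^0 = 1\}$, and $X$ is locally the graph
\be
  z^{\N} \ = \ f(z^1,\ldots,z^n), \qquad f(0) = 0, \quad df(0) = 0.
\ene
Such a normal form is available at every smooth point.

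Next I would define $e : U \to \cF^1$ by
\begin{align*}
  e_0(z) & \ = \ (1,\, z^1,\, \ldots,\, z^n,\, f(z)), \\
  e_a(z) & \ = \ (0,\, \d^1_a,\, \ldots,\, \d^n_a,\, \partial_a f(z)), \qquad 1 \le a \le n, \\
  e_{\N}(z) & \ = \ (0,\, 0,\, \ldots,\, 0,\, 1).
\end{align*}
The point $\pi(e_0(z))$ parametrizes $X$, the vectors $e_a = \partial_a e_0$ are tangent to the graph, and $e_{\N}$ is transverse, so $\widehat{T}_{[e_0]}X = \langle e_0, e_1, \ldots, e_n \rangle$ and $e$ is first-order adapted.

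The pullback of the Maurer-Cartan form is then determined by $de_j = \omega^k_j \, e_k$. Since the first $n+1$ components of each $e_a$ are constant in $z$ and $e_{\N}$ is entirely constant, a routine computation gives
\be
  de_0 \ = \ \sum_a e_a\, dz^a, \qquad de_a \ = \ f_{ab}\, dz^b\, e_{\N}, \qquad de_{\N} \ = \ 0,
\ene
from which $\omega^a_0 = dz^a$, $\omega^{\N}_a = f_{ab}\, dz^b$, and every other component of $\omega$ vanishes. There is no genuine obstacle here; the content of the statement is just that the combined freedom of translations in $V$, linear changes of coordinates preserving the flag $\hat{x} \subset \widehat{T}_x X \subset V$, and the fiber action of $\cF^1 \to X$, is precisely enough to kill every Maurer-Cartan component outside $\{\omega^a_0,\, \omega^{\N}_a\}$, and the Monge construction realizes this normalization in one stroke.
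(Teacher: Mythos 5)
Your proposal is correct and is essentially identical to the paper's proof: the paper simply points to the coordinate (Monge/graph) framing of \S\ref{sec:coords}, which is exactly the frame $e_0, e_a = \partial_a e_0, e_\N$ you write down, with the same computation yielding $\w^a_0 = \td z^a$, $\w^\N_a = f_{z^az^b}\,\td z^b$, and all other entries zero.
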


\begin{proof} One such local framing is given in \S\ref{sec:coords} below.
\end{proof}

\begin{definition} \label{dfn:C}
Given a smooth point $x \in X$, $\cC_{k,x} \subset \bP T_xX$ is the zero locus of $F_{2,x} , \ldots , F_{k,x}$.
\end{definition}

%------------------------------------------------------------------
\subsection{Fubini forms in a coordinate framing}\label{sec:coords}
%------------------------------------------------------------------- 
Given a smooth point $x \in X$, fix homogeneous coordinates $[z_0:z_1:\cdots:z_\N]$ of $\bP V$ so that $x = [1:0:\cdots:0]$, and $\widehat X$ is tangent to $\{ z_\N=0 \}$ at $(1,0,\ldots,0) \in \hat x \subset V$.  Setting $z_0=1$ yields a coordinate neighborhood on $\bP^\N$ with local coordinates $(z^1,\ldots,z^\N)$ centered at $x$.  Shrinking the coordinate neighborhood $U$ if necessary, we may assume that $U \cap X$ is a graph $z^\N = f(z^1,\ldots,z^n)$ over its embedded tangent space at $x$.    A local, first-order adapted frame $e : U \to \cF^1$ over $U$ is defined by 
\begin{eqnarray*}
  e_0 & = & \frac{\partial}{\partial z^0} + z^a \frac{\partial}{\partial z^a}
          + f \frac{\partial}{\partial z^\N} \\  
  e_a & = & \frp{z^a} + f_{z^a} \frp{z^\N}  \\
  e_\N & = & \frp{z^\N} \, ;
\end{eqnarray*}  
here, $f_{z^a}$ denotes partial differentiation, and $1 \le a \le n$.  With respect to this frame, the only nonzero entries in the Maurer-Cartan form $\w = \w^j_k e_j \ot e^k$ are 
\begin{displaymath}
  \w^a_0 \ = \ \td z^a \qquad \mathrm{and} \qquad 
  \w^\N_a \ = \ f_{z^az^b} \, \td z^b \, .
\end{displaymath}
Regard $z = (z^1,\ldots,z^n)$ as local coordinates on $X$.  It is immediate from the expression for $\w^\N_a$ above that the coefficients of the second Fubini form at $z$ are $r_{ab}(z) = f_{z^az^b}(z)$.  More generally, Proposition \ref{prop:F} implies the Fubini forms at $z$ are given by 
\begin{displaymath}
  F_{k,e}(z) = (-1)^k f_{z^{a_1}\cdots z^{a_k}}(z) \, 
        \td z^{a_1} \cdots \td z^{a_k} \ot 
        \underline{e_\N} \, .
\end{displaymath}
See \cite[\S 3.3.7]{IL} for details.
\medskip

%\noindent{\it Remark.}
%Recall the framing $e$ over $U$ allows us to identify $T_zX$, $z\in U$ with a fixed %vector space $T$, and to regard $F_{k,e}(z)$ as an element of $S^k T^*$.
%\medskip

If the $y = (y^1 , \ldots , y^n )$ are linear coordinates on $T_zX$ induced by $\{ e_1 , \ldots , e_n \}$, it follows that $\cC_{k,z}$ is the zero set of the polynomials 
$$
f_{z^{a_1}\cdots z^{a_\ell}}(z) \, y^{a_1} \cdots y^{a_\ell} \, , \quad  2\leq \ell\leq k  \, .
$$
In particular, 
$$
\cC_{k,x}=\{\ell\in \BP T_xX\mid \exists
L=\pp 1\subset \pp n,\ \BP T_xL=\ell,\ \tmult_x(X\cap L)\geq k+1\}.
$$

\begin{lemma}  \label{lem:choice}
  Given $\cC\subset \BP T$, a complete intersection of hypersurfaces of
degrees $2\hd d$ and $d' \ge d$, there exists a hypersurface $X$ of degree $d'$, a point $x \in X$,
and a local framing $e$  such that $\tZeros(\tilde\phi_{d,e}(x)) = \cC$.
\end{lemma}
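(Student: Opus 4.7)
The plan is to construct $X$ explicitly as a graph over its tangent hyperplane at $x$ with prescribed Taylor coefficients, and then apply the coordinate framing of \S\ref{sec:coords} to read off the Fubini forms directly. The key point is that in the coordinate framing, the Fubini forms at $x$ are (up to sign) the Taylor coefficients of the defining graph function, so we can prescribe them at will.

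First, I would choose homogeneous generators $g_2, g_3, \ldots, g_d$ of the ideal cutting out $\cC \subset \bP T$, with $g_k \in S^k T^*$ of degree $k$. Identify $T \cong \bC^n$ via a fixed basis with dual coordinates $y^1,\ldots,y^n$, so that each $g_k$ is a homogeneous polynomial in $(y^1,\ldots,y^n)$.

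Next, I would define the candidate hypersurface $X \subset \bP^{n+1} = \bP V$ (with $V = \bC^{n+2}$ and coordinates $[z^0{:}z^1{:}\cdots{:}z^\N]$) as the zero locus of
\[
  P(z^0,\ldots,z^\N) \ = \ z^\N(z^0)^{d'-1} \ - \ \sum_{k=2}^d \tfrac{(-1)^k}{k!}\,(z^0)^{d'-k}\,g_k(z^1,\ldots,z^n).
\]
By construction $P$ is homogeneous of degree $d'$, so $X$ is a hypersurface of degree $d'$. The point $x := [1{:}0{:}\cdots{:}0]$ lies on $X$, and $\partial P/\partial z^\N|_x = 1 \neq 0$, so $x$ is a smooth point with affine tangent hyperplane $\{z^\N=0\}$ in the chart $\{z^0=1\}$. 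Shrinking to a neighborhood of $x$, $X$ is the graph $z^\N = f(z^1,\ldots,z^n)$ with $f(z) = \sum_{k=2}^d \tfrac{(-1)^k}{k!}\,g_k(z)$.

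Now I would apply the coordinate framing of \S\ref{sec:coords} at $x$. Proposition \ref{easyframe}/the computation in that section gives
\[
  F_{k,e,x} \ = \ (-1)^k\,f_{z^{a_1}\cdots z^{a_k}}(0)\,y^{a_1}\cdots y^{a_k}\otimes\uN,\qquad 2\le k\le d.
\]
Since the degree-$k$ homogeneous component of $f$ is $\tfrac{(-1)^k}{k!}g_k$ and all lower and higher contributions to the $k$-th Taylor coefficient at the origin vanish, Euler's identity gives $f_{z^{a_1}\cdots z^{a_k}}(0)\,y^{a_1}\cdots y^{a_k} = (-1)^k\,g_k(y)$. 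Hence $F_{k,e,x} = g_k(y)\otimes\uN$, and so
\[
  \tZeros(\tilde\phi_{d,e}(x)) \ = \ \{\,g_2=\cdots=g_d=0\,\} \ = \ \cC.
\]

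There is essentially no obstacle; the only thing requiring care is the bookkeeping of signs and factorials so that the degree-$k$ Taylor coefficient of $f$ reproduces $g_k$ exactly after applying the coordinate formula for $F_{k,e,x}$. One could equally well take any nonzero scalar multiples of the $g_k$'s in defining $f$, since only the zero locus of the Fubini forms is constrained; the construction therefore has ample room to absorb conventions, and the hypothesis $d' \ge d$ is used only to ensure that the homogenizing factors $(z^0)^{d'-k}$ have nonnegative exponents.
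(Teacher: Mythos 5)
Your overall strategy is exactly the paper's: realize $X$ as the closure of a graph $z^\N = f(z)$ whose low-degree Taylor components are the chosen generators of $I(\cC)$, and read off the Fubini forms from the coordinate framing of \S\ref{sec:coords}. The normalization by $(-1)^k/k!$ is harmless, as you note, since only the zero locus matters.

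There is, however, a genuine gap in the case $d' > d$. Your homogeneous form
$P = z^\N(z^0)^{d'-1} - \sum_{k=2}^d \tfrac{(-1)^k}{k!}(z^0)^{d'-k} g_k$
is divisible by $(z^0)^{d'-d}$: every monomial appearing in it carries at least that power of $z^0$, because $d'-1 \ge d'-d$ and $d'-k \ge d'-d$ for $k \le d$. Hence $P = (z^0)^{d'-d}\,Q$ with $\deg Q = d$, and the zero locus of $P$ is the union of the hyperplane $\{z^0=0\}$ and the degree-$d$ hypersurface $\{Q=0\}$ containing $x$; it is not a (reduced, irreducible) hypersurface of degree $d'$. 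Your closing remark that the hypothesis $d'\ge d$ ``is used only to ensure that the homogenizing factors have nonnegative exponents'' is precisely where this is missed. The fix is immediate and is what the paper's proof does implicitly by taking $f$ to be a polynomial \emph{of degree} $d'$: append to $f$ a nonzero homogeneous component $f_{d'}$ of degree $d'$ (generic, say). This does not change any Taylor coefficient of order $\le d$ at the origin, so the Fubini forms $F_{2,e,x},\ldots,F_{d,e,x}$ and hence $\tZeros(\tilde\phi_{d,e}(x)) = \cC$ are unaffected, while the homogenization of $z^\N - f(z)$ is now genuinely irreducible of degree $d'$.
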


\noindent{\it Remark.} Thus given
$\bold P\in \oplus_{\d=2}^dS^{\d}T^*$ there exist  $X$ and $x\in X$ such that $\phi_d(x)=\bold P$.   It is not clear which points of $\oplus_{\d=2}^dS^{\d}T^*$ can be the image of a \emph{general} point of some $X$.

\begin{proof}
  Pick $\bold P=(P_2\hd P_d)$, with $P_j\in S^jT^*$ so that $\cC = [\tZeros(\bold P)]$.  Let $f(z) = f(z^1,\ldots,z^n)$ be a polynomial of degree $d'$ and let $f_k(z)$ denote the degree $k$ homogeneous component of $f(z)$.  Specify $f_0 = 0$, $f_1 = 0$ and $f_\d = P_\d$ for $2 \le \d \le d$.  Take $X$ to be the closure of the graph of $f$, and $x = [1:0:\cdots:0]$.
\end{proof}

%------------------------------------------------------------------------
\section{Results in terms of Fubini forms}\label{sec:frame_thms}
%------------------------------------------------------------------------

%------------------------------------------------------------------------
\subsection{Gauss maps}\label{sec:gauss}
%------------------------------------------------------------------------

We will consider two types of Gauss map: the Gauss map of a hypersurface in $\BP V$, and the Gauss map associated to a differentiable map between manifolds.  Given a vector space $V$, let $G(m,V)$ denote the Grassmannian of $m$-planes through the origin in $V$.  

The \emph{Gauss map $\gamma_X$ of a hypersurface} $X \subset \bP^\N = \bP V$ is
\begin{eqnarray*}
 \gamma_X: X_\mathrm{smooth} & \ra & G(n+1,V) \, ,\\
 x & \mapsto & \gamma_X(x) := \widehat T_x X \, .
\end{eqnarray*}
The tangent space $T_E G(n+1,V)$ may be identified with $(V/E) \ot E^*$.  Making use of a frame $e$ we may identify $T_{\gamma_X(x)} G(n+1,V)$ with $N_xX\ot\hat x  \ot (\widehat T_x X)^*$.  Under the identification the differential is given by $\td\gamma_{X,x}(v^a \underline e_a) = v^a r_{ab} \, \underline e_\N \ot e^b \in  N_xX\ot \hat x \ot (\widehat T_x X)^*$.  

The Gauss map $\gamma_X$ is \emph{nondegenerate} if   $\td\gamma_{X,x}: T_xX\ra T_{\gamma_X(x)} G(n+1,V)$ is of maximial rank $n$.  The Gauss map is nondegenerate if and only if the quadratic polynomial $F_{2,e}$ is of full rank; equivalently, the quadric hypersurface $\{ F_{2,e}(x)=0 \} \subset \bP T_x X$ is smooth.

\medskip

\noindent{\it Remark.}  The second Fubini form $F_2$ and (projective) second fundamental form (defined by $\td \gamma_X$) agree.  See \cite[Ch. 3]{IL}.

\medskip

Given a manifold $\Sigma$,
let $\bold G(n,T\Sigma) \to \Sigma$ denote the Grassmann bundle, whose fiber over $\sigma\in \Sigma$ is $G(n,T_\sigma\Sigma)$.
Let $f: Z\ra \Sigma$ be a $C^1$ map of manifolds of generic rank $r$ and set $Z' = \{z\in Z\mid \trank(df_z)=r \} \subset Z$.  The \emph{Gauss map $\gamma_f$ associated to $f$} is
\begin{align*}
\gamma_f: Z'&\ra  \bold G(r,T\Sigma) \\
  x &\mapsto 
  T_{f(x)}f(Z).
\end{align*} 

%------------------------------------------------------------------------
\subsection{Results}
%------------------------------------------------------------------------
Given $S=(S_3\hd S_{d+1})\in \oplus_{\d=3}^{d+1}S^{\d}T^*$, such that the common zero locus of
the $S_j$ is not a cone, we obtain
an $n$-plane $E_S\in G(n,\oplus_{\d=2}^dS^{\d}T^*)$ by
$$
E_S=\langle (v\lefthook S_3\hd v\lefthook S_{d+1})\mid v\in T\rangle \, .
$$
Here $v\lefthook: S^{\d}T^*\ra S^{\d-1}T^*$ denotes the interior product, i.e.,
$(v\lefthook P)(w_1\hd w_{\d-1})=P(v,w_1\hd w_{\d-1})$.
Fix $\bold P= (P_2\hd P_d)\in \oplus_{\d=2}^{d }S^{\d}T^*$.  
Define a map
\begin{eqnarray*}
  \mu_{\bold  P} :  S^{d+1}T^* & \ra & \oplus_{\d=3}^{d+1}S^{\d}T^* \, , \\
  \alpha & \mapsto &  (P_3\hd P_d, \alpha) \, .
\end{eqnarray*}
Let $I_\d(\bold P)$ denote the degree $\d$ homogeneous component of the ideal generated by $(P_2\hd P_d)$.
% -------- DFN -------------------
\begin{definition}\label{xiphidef} Using the notations  $E_S,\mu_{\bold  P}$ above, 
define the varieties
\begin{eqnarray*}
\bold\Xi_{\bold P} & := & \{ E_{\mu_{\bold P}(\a)} \mid  \a\in S^{d+1}T^* \}
  \ \subset \ G(n, \oplus_{\d=2}^dS^{\d}T^*) \\
\bold\Phi_{\bold P} & := & \{ E_{\mu_{\bold P}(\b)}\mid \b\in I_{d+1}(\bold P) \}
  \ \subset \ G(n, \oplus_{\d=2}^dS^{\d}T^*) \, .
\end{eqnarray*}
Since $\oplus_{\d=2}^dS^{\d}T^*$ is a vector space, we may identify the tangent space $T_{\bold P}(\oplus_{\d=2}^dS^{\d}T^*)$
with $ \oplus_{\d=2}^dS^{\d}T^*$, and define sub-bundles of the Grassmann bundle:
$\bold \Phi_d\subset \bold \Xi_d\subset\bold G(n, T(\oplus_{\d=2}^dS^{\d}T^*))$ by
$(\bold\Xi_d)_{\bold P}= \bold\Xi_{\bold P}$,  and $(\bold\Phi_d)_{\bold P}= \bold\Phi_{\bold P}$.
\end{definition}
 
%----------------------------------------------------
\begin{theorem}\label{phixthm}
Let $X^n\subset\pp{n+1}$ be a hypersurface of degree at least $d$, with
nondegenerate Gauss map. Let $U\subset X$ be
a $d$-general open subset admitting a local framing $e$ as in \S\ref{sec:coords}, and consider the map
\begin{equation}\label{tildephimap}
\renewcommand{\arraystretch}{1.4}
\begin{array}{rcl}
\tilde\phi_{d,e}: U  & \ra & \oplus_{\d=2}^dS^{\d}T^*\\
x & \mapsto & \left( F_{2,e}(x)\hd F_{d,e}(x) \right) \, .
\end{array}\end{equation}
Assume further that   $\gamma_{\tilde\phi_{d,e}}$ is defined on $U$; that is,
$\trank(d\tilde\phi_{d,e}|_x)=n$ for all $x\in U$. Then,
recalling the varieties $\bold\Xi_d$ and $\bold\Phi_d$ of Definition \ref{xiphidef},
we have the following:
\begin{list}{\emph{(\alph{Lcnt})}}
{ \usecounter{Lcnt} \setlength{\leftmargin}{20pt}}
\item The Gauss image $\gamma_{\tilde\phi_{d,e}}(U)$ is contained in ${\bold\Xi}_d$.
\item If $\tdeg(X)=d$, then 
$\gamma_{\tilde\phi_{d,e}}(U)\subset \bold\Phi_d$.
\end{list}
\end{theorem}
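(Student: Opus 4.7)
The plan is to work throughout in the coordinate framing of \S\ref{sec:coords}, which affords both the explicit expression $F_{k,e}(z) = (-1)^k f_{z^{a_1}\cdots z^{a_k}}(z)\,\ule^{a_1}\cdots\ule^{a_k}\otimes\uN$ for the Fubini forms and the simplification that the only non-vanishing Maurer-Cartan components are $\omega^a_0 = \td z^a$ and $\omega^\N_a$.  I identify $T_zU$ with $T$ via the frame $\{\ule_1,\ldots,\ule_n\}$ and differentiate directly; equivalently, the recursion of Proposition \ref{prop:F} specializes in this framing to $\td r_{a_1\ldots a_p} = -\,r_{a_1\ldots a_p b}\,\omega^b_0$.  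Either route produces
\begin{equation*}
d\tilde\phi_{d,e}|_z(v) \;=\; -\bigl(v\lefthook F_{3,e}(z),\;v\lefthook F_{4,e}(z),\;\ldots,\;v\lefthook F_{d+1,e}(z)\bigr) \quad\text{for}\ v \in T_zU \cong T.
\end{equation*}
Setting $\bold P := \tilde\phi_{d,e}(z) = (F_{2,e}(z),\ldots,F_{d,e}(z))$ and $\a := F_{d+1,e}(z) \in S^{d+1}T^*$, the image of $d\tilde\phi_{d,e}|_z$ is precisely $E_{\mu_{\bold P}(\a)}$, and this $n$-plane lies in $\bold\Xi_{\bold P} = (\bold\Xi_d)_{\bold P}$ by definition.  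The rank-$n$ hypothesis ensures the image has the correct dimension, so part (a) follows.

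For (b), the computation above reduces matters to proving $F_{d+1,e}(z) \in I_{d+1}(\bold P)$ whenever $\tdeg(X) = d$.  My approach is to first establish set-theoretic vanishing and then upgrade it to ideal membership.  Recall from \S\ref{sec:coords} that $v \in \cC_{k,z}$ precisely when there is a line through $z$ tangent to $v$ meeting $X$ with multiplicity at least $k+1$ at $z$.  When $\tdeg(X)=d$, Bezout forces any line satisfying $\tmult_z(X\cap\ell) \geq d+1$ to be contained in $X$, and hence to meet $X$ to infinite order at $z$; thus $\cC_{d,z} = \cC_{d+1,z}$ set-theoretically, which is to say $F_{d+1,e}(z)$ vanishes on the common zero locus $Z(F_{2,e}(z),\ldots,F_{d,e}(z)) \subset \bP T$.

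The main obstacle will be to upgrade this set-theoretic vanishing to the algebraic statement $F_{d+1,e}(z) \in I_{d+1}(\bold P)$, and this is exactly where the $d$-generality hypothesis enters crucially.  Under $d$-generality the Hilbert polynomial of $\cC_{d,z}$ is that of a transverse complete intersection of type $(2,\ldots,d)$, so $(F_{2,e}(z),\ldots,F_{d,e}(z))$ is a regular sequence cutting out a reduced subscheme of $\bP T$.  The Nullstellensatz then places $F_{d+1,e}(z)$ in the radical ideal, which coincides with the ideal itself by reducedness, and Koszul regularity identifies its degree $d+1$ graded component with $I_{d+1}(\bold P)$.  Combined with the first paragraph, this yields $\gamma_{\tilde\phi_{d,e}}(z) \in \bold\Phi_{\bold P}$, completing (b).
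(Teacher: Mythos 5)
Your part (a) is correct and is precisely the paper's argument: in the coordinate framing of \S\ref{sec:coords} the recursion of Proposition \ref{prop:F} collapses to $\td r_{a_1\ldots a_p}=-r_{a_1\ldots a_pb}\,\w^b_0$, so $\td F_{\d,e}=-F_{\d+1,e}$ viewed as a $T^*$-valued symmetric form, and the image of $\td\tilde\phi_{d,e}|_z$ is exactly $E_{\mu_{\bold P}(\a)}$ with $\bold P=\tilde\phi_{d,e}(z)$ and $\a=F_{d+1,e}(z)$, which lies in $\bold\Xi_{\bold P}$ by definition.

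Part (b) is where you diverge from the paper, and your route has a genuine gap. What is needed is the \emph{ideal-theoretic} membership $F_{d+1,e}(z)\in I_{d+1}(\bold P)$, the degree-$(d+1)$ graded piece of the ideal \emph{generated by} $F_{2,e}(z),\ldots,F_{d,e}(z)$, since that is how $\bold\Phi_{\bold P}$ is defined. Your Bezout argument correctly yields only the set-theoretic statement that $F_{d+1,e}(z)$ vanishes on $\tZeros(F_{2,e}(z),\ldots,F_{d,e}(z))$, i.e.\ membership in the radical. The step you use to close the distance --- ``$d$-generality implies the forms are a regular sequence cutting out a reduced subscheme, hence the ideal is radical'' --- is not available under the theorem's hypotheses. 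A $d$-general point is, by the paper's definition, only one near which the discrete invariants are locally \emph{constant}; it does not force them to take generic values, and in particular does not rule out syzygies among the Fubini forms (the paper explicitly allows for this, which is why the moduli space carries the decorations $p$ and $G$). Moreover, even a genuine complete intersection of type $(2,\ldots,d)$ with the expected Hilbert polynomial need not be reduced, so $\sqrt{I}=I$ does not follow from the Hilbert polynomial. The correct statement, which the paper invokes, is an exact algebraic identity valid for \emph{every} hypersurface of degree exactly $d$ with no genericity at all: $F_{d+1}\equiv 0 \tmod (F_2,\ldots,F_d)$. It follows from the inductive construction of the Fubini forms applied to the homogeneous degree-$d$ defining polynomial $P$ (the relevant polarization of $P$ against $e_0$ runs out of degree at order $d+1$), or equivalently by differentiating $P(1,z,f(z))=0$ a total of $d+1$ times in the coordinate framing; see \cite{L_CI} or \cite[Ch.~3]{IL}. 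Replacing your Nullstellensatz step by this identity repairs the argument and removes the spurious genericity requirements.
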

%----------------------------------------------------

\noindent{\it Remark.}  The assumption on the rank of $\td \tilde \phi_{d,e}$ is generic; see Proposition \ref{fullrank1} below.

%\noindent{\it Remark.}  Theorem \ref{phixthmb} is a corollary of Theorem \ref{phixthm}.  **out %of place
%as not yet stated**

\begin{proof}
Choose a local framing as in Proposition \ref{easyframe}, then
\begin{eqnarray*}
  \td \tilde \phi_{d,e}(z) & = & ( \td F_{2,e}(z) , \ldots , \td F_{d,e}(z) ) \\
  & = & - ( F_{3,e}(z) , \ldots , F_{d+1,e}(z) ) \, .
\end{eqnarray*}
Given $z \in U$, $\gamma_{\tilde \phi_{d,e}}(z)$ is the $n$-plane  
\begin{equation}\label{eqn:dtildephi}
\left\langle  \,  - ( v \lefthook F_{3,e}(z) , \ldots , v \lefthook F_{d+1,e}(z) ) \, \mid v\in T \, \right\rangle .
\end{equation}
  This proves Part (a).

If $\tdeg(X)=d$, then $F_{d+1,e}(x)\in I_{d+1}(\cC_{d,x})$, establishing Part (b).
\end{proof}

%
%----------------------------------------------------
\begin{proposition} \label{fullrank1}
 Let $X^n\subset\pp{n+1}$ be a general  hypersurface of degree $d' \geq d$  with
nondegenerate Gauss map. Let $U\subset X$ be
a $d$-general open subset admitting a local first-order adapted framing $e$ of $X$ as in Proposition \ref{easyframe}, and consider the map \eqref{tildephimap}.  Then $\trank(\td\tilde\phi_{d,e}|_x)=n$ for all $x\in U$.
\end{proposition}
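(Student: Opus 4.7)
The differential is already computed in the proof of Theorem \ref{phixthm}: for $e$ as in Proposition \ref{easyframe},
\begin{displaymath}
  \td\tilde\phi_{d,e}|_x(v) \ = \ -\bigl( v \lefthook F_{3,e}(x) \hd v \lefthook F_{d+1,e}(x) \bigr), \qquad v \in T.
\end{displaymath}
The plan is to deduce $\trank(\td\tilde\phi_{d,e}|_x) = n$ from triviality of $K(x) := \{v \in T : v \lefthook F_{j,e}(x) = 0 \text{ for } 3 \le j \le d+1\}$, using the sufficient condition that $F_{d+1,e}(x) \in S^{d+1}T^*$ have \emph{trivial vertex}, i.e., that the linear contraction $T \to S^d T^*$, $v \mapsto v \lefthook F_{d+1,e}(x)$, be injective, or equivalently that $F_{d+1,e}(x)$ not be a cone. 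Polynomials with trivial vertex form a nonempty Zariski open subset $\cW \subset S^{d+1}T^*$.

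The space of triples $(X,x,e)$ parameterizing smooth degree-$d'$ hypersurfaces $X$, points $x \in X$ at which $\gamma_X$ is nondegenerate, and local framings as above, is irreducible and quasi-projective, and the condition $F_{d+1,e}(x) \in \cW$ is Zariski open and $GL(T)$-invariant. It therefore suffices to exhibit one pair $(X_0,x_0)$ realizing $F_{d+1,e}(x_0) \in \cW$; lower semicontinuity of rank then forces $\trank(\td\tilde\phi_{d,e}|_x) = n$ on a Zariski dense open subset of the parameter space, which is enough to conclude the proposition for general $X$ on any $d$-general $U$.

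For $d' \ge d+1$ I would take $X_0$ to be the projective closure of the affine graph
\begin{displaymath}
  z^\N \ = \ f(z^1 \hd z^n) \ := \ \tfrac12 \sum_a (z^a)^2 \ + \ \sum_a (z^a)^{d+1} \ + \ \ep\, (z^1)^{d'},
\end{displaymath}
with $\ep=0$ if $d'=d+1$ and $\ep=1$ otherwise (to enforce $\tdeg(X_0) = d'$). Centering at $x_0 = [1{:}0{:}\cdots{:}0]$, the coordinate formula of \S\ref{sec:coords} gives $F_{2,e}(x_0) = \sum_a (\td z^a)^2$ (so $\gamma_{X_0}$ is nondegenerate) and $F_{d+1,e}(x_0)$ proportional to $\sum_a (\td z^a)^{d+1}$, whose contraction with $v = v^a \partial/\partial z^a$ is proportional to $\sum_a v^a (\td z^a)^d$ and so vanishes only for $v = 0$.

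The main obstacle is the case $d' = d$, since any degree-$d$ polynomial graph has vanishing $(d+1)$-jet at the origin. To handle it I plan to use the defining equation
\begin{displaymath}
  z_0^{d-1} z_\N \ - \ z_0^{d-2} z_\N^2 \ - \ A(z_0 \hd z_n) \ = \ 0,
\end{displaymath}
with $A$ a generic degree-$d$ form vanishing to order $\ge 2$ at $(1, 0 \hd 0)$. Solving the local quadratic $z_\N^2 - z_\N + A(1, z) = 0$ in the branch through the origin produces a non-polynomial power series $f(z) = \sum_{k \ge 1} \tfrac{1}{k}\binom{2k-2}{k-1} A(1, z)^k$. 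Decomposing $A(1, z) = Q_2 + \cdots + Q_d$ into homogeneous pieces, the $(d+1)$-homogeneous part of $f$ is $f_{d+1} = 2 \sum_{j + k = d+1,\ j, k \ge 2} Q_j Q_k + (\text{contributions from }A^{\ge 3})$. The concluding verification is then concrete: e.g., with $Q_2 = \sum_a (z^a)^2$ and $Q_{d-1} = \sum_a (z^a)^{d-1}$ (all other $Q_j$ set to zero), a direct computation shows $f_{d+1}$ has trivial vertex because its directional derivative along any nonzero $v$ decomposes into linearly independent monomials. This explicit polynomial-algebra check is the only nonroutine calculation in the argument.
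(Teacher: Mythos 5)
Your overall strategy --- reduce to a single explicit example and invoke lower semicontinuity of rank --- is exactly the paper's: its proof of Proposition \ref{fullrank1} is one line deferring to the family in \S\ref{ranknexample}. Where you diverge is in which component of $\td\tilde\phi_{d,e}|_x(v)=-\left(v\lefthook F_{3,e}(x)\hd v\lefthook F_{d+1,e}(x)\right)$ you use to certify injectivity. Since the kernel of the differential is the intersection of the kernels of all $d-1$ contractions, it suffices that \emph{any one} map $v\mapsto v\lefthook F_{j,e}(x)$ be injective, and the paper takes $j=3$: for the polynomial graph $z^\N=p_2(z)+\cdots+p_{d'}(z)$ of \S\ref{ranknexample} one has $F_{3,e}(0)=-6\,p_3$, and a generic cubic is not a cone. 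That certificate works uniformly for every $d'\ge d$, including $d'=d$, with a plain polynomial graph. Your choice $j=d+1$ buys nothing and manufactures the artificial difficulty you then have to circumvent, namely that a degree-$d$ polynomial graph has $F_{d+1,e}\equiv 0$. (Your $d'\ge d+1$ branch is correct as written.)

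In the $d'=d$ branch the concrete example is flawed: with only $Q_2$ and $Q_{d-1}$ nonzero, i.e. $A=z_0^{d-2}Q_2+z_0Q_{d-1}$, every monomial of $z_0^{d-1}z_\N-z_0^{d-2}z_\N^2-A$ is divisible by $z_0$, so the form is reducible and the component through $x_0=[1:0:\cdots:0]$ has degree $d-1$, violating the hypothesis $d'=d$. The repair is to add a nonzero $Q_d$ (say $\sum_a(z^a)^d$); this restores irreducibility and degree $d$ without changing $f_{d+1}$, since in $A^2$ the term $Q_d$ can only pair with $Q_1=0$ to reach degree $d+1$. With that fix the final computation you assert but do not carry out does go through: one gets $f_{d+1}=2Q_2Q_{d-1}+c\,Q_2^{(d+1)/2}$ (the second term present only for odd $d$, with $c>0$ a Catalan number), and for $v\neq 0$ with $v^a\neq 0$ the coefficient of $(z^a)^d$ in $\partial_vf_{d+1}$ is a strictly positive multiple of $v^a$, so the vertex is trivial. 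You should either supply this verification explicitly or, better, switch to the $j=3$ certificate and delete the entire second construction.
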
 
%----------------------------------------------------

\begin{proof}
By lower semi-continuity it suffices to show that $\trank(\td\tilde\phi_{d,e}|_x)=n$ for just one $X$.
A family of such $X$ is given in \S\ref{ranknexample}.
\end{proof}

%-------------------------------------------------
\subsection{Example}\label{ranknexample}
%-------------------------------------------------

Fix linear coordinates $\overline z = (z^0,\ldots,z^\N) \in \bC^{\N+1}$.  Set $z = (z^1,\ldots,z^n)$ and let $p_j(z)$ be a homogeneous polynomial of degree $j = 2\hd d$ with $d>2$.  Consider the degree $d$ hypersurface
$X_F$ given by the zero set of:
$$
  F(\overline z) \ = \ - (z^0)^{d-1} \, z^\N + (z^0)^{d-2} \, p_2(z) + (z^0)^{d-3} \, p_3(z) \, +\cdots + p_d(z).
$$
Note that 
$$
  (X_F)_{\tsing}=\{ z^0=0 \}\cap \tZeros(p_{d-1})\cap \tZeros(p_d)_{\tsing} \, .
$$
In particular, $X_F$ will usually be smooth.
The point $x = [1:0:\cdots:0] \in \bP^\N$ lies on   $X_F$.  In the 
affine coordinate neighborhood $U = \{ z^0 = 1 \} \subset \bP^\N$, $X$ may be expressed as a graph 
$$
  z^\N = f(z) =  p_2(z) +\cdots + p_d(z) \, .
$$
Let $e = (e_0,\ldots,e_\N)$ be a first-order framing, constructed as in \S\ref{sec:coords}.  

Let $y = (y^1 , \ldots , y^n )$ be linear coordinates on $T^*$ induced by the frame $e$.  Then 
$$
  F_{\d,e}(z) \ = \ (-1)^\d \, \left(  
  \frac{\partial^\d f}{\partial z^{a_1} \cdots \partial x^{a_\d} } (z)\right) \, 
  y^{a_1} \cdots y^{a_\d} \, .
$$
If $p_{\d}(z) = p_{a_1\hd a_{\d}} \, z^{a_1} \cdots  z^{a_{\d}}$, with $p_{a_1\hd a_{\d}}$ symmetric, then the second Fubini quadric is given by $F_{2,e}(z) = r_{ab}(z) y^a y^b$ with 
$$
 r_{ab}(z) \ = \ 2 \, p_{ab} + \sum_{\d=3}^d j(j-1) \, p_{abc_{3}\cdots c_{\d}} z^{c_3}\cdots z^{c_\d} \, .
$$  
It follows that if $p_2$ is nondegenerate, then the Fubini quadric will be nondegenerate in a neighborhood of $z=0$.  

Additionally, note that $F_{\d,e}(0) = (-1)^\d \d ! \, p_{a_1\cdots a_\d} y^{a_1}\cdots y^{a_\d}$, $2\le\d\le d$, and $F_{\d,e} = 0$ for all $\d > d$, so that 
$$\td \tilde\phi_{d,e}(0)(T) \ = \ 
  \left\langle \left( 3\cdot 3! \, p_{ab_1b_2} \, y^{b_1} y^{b_2} \, , \, \cdots , 
  (-1)^{d-1} d \cdot d! \, p_{ab_1\cdots b_{d-1}}y^{b_1}\cdots y^{b_{d-1}} \, , \, 
  0 \right) \mid 1\leq a\leq n \right\rangle  .
$$
In particular, for a generic choice of $p_3$, the differential $\td \tilde\phi_{d,e}(0)$ will have maximal rank $n$.

% ---------------------------------------------------
\subsection{Exterior differential systems interpretation}\label{sec:EDS}
% ---------------------------------------------------
We may rephrase Theorem  \ref{phixthm}  in the language  of exterior differential systems (EDS) as follows. (See \cite[p. 177]{IL} or \cite{BCG3}.)
Let $\Sigma$ be a manifold and let $\pi: \bold G(n,T\Sigma)\ra \Sigma$ denote the
Grassmann bundle.  The Grassmannian $\bold G(n,T\Sigma)$ carries a canonical linear Pfaffian system $(\sI,\sJ)$.  Let $p \in \Sigma$,  $E \in \bold G(n,T_p\Sigma)$, and $E^\perp \subset T_p^*\Sigma$ be the forms vanishing on $E$.  The canonical EDS on $\bold G(n,T\Sigma)$ is generated by the
subspace $\sI \subset T^*\bold G(n,T\Sigma)$, defined fiber-wise by $\sI_{p,E} = \pi^*(E\upperp)$.  The independence condition $\sJ\supset\sI$ is given by $\sJ_{p,E}=\pi^*(T^*_p\Sigma)$.  Integral manifolds of this tautological system are immersed $n$-dimensional submanifolds 
$i: M\ra \bold G(n,T\Sigma)$ such that $i^*(\sI)\equiv 0$ and
$i^*(\La n(\sJ/\sI))$ is non-vanishing.  They are characterized as the Gauss images of immersed $n$-dimensional submanifolds $N$ of $\Sigma$.
That is, there exists $N$ such that $M = \gamma(N)$, where $\gamma$ is the Gauss map $p \mapsto (p,T_pN)\in G(n,T_p\Sigma)$.  Theorems \ref{phixthm}.a and \ref{phixthm}.b are rephrased as 

\begin{quote}
{\it  The image $\tilde\phi_d(X)$ is an integral manifold of the pull-back of the tautological system $(\sI,\sJ)$ on $\bold G(n,T(\op_{\d=2}^dS^{\d}T^*))$ to $\bold \Xi_d$ (respectively, $\bold \Phi_d$).}
\end{quote}

Other results of this paper can similarly be rephrased in the language of EDS.

%------------------------------------------------
\section{Descent to $\cM_d$}\label{sec:quotient}
%------------------------------------------------
Fix a polynomial $p$ that occurs as a Hilbert polynomial of a 
codimension $d-1$   complete intersection
of hypersurfaces of degrees $2,3\hd d$ in $\pp{n-1}$ such that the degree two
hypersurface  is smooth.  (Recall that at a general point $x$ on a generic $X$, $\cC_x$ with be such a complete intersection.)
  
Let 
$$\cU_{p}=\{ (P_2\hd P_d)\in \oplus_{\d=2}^dS^{\d}T^* \mid
\tHilb(\tZeros(P_2\hd P_d))=p\}
$$
and let 
$$
  \pi_{p}: \cU_{p}\ra \tHilb_{p}
$$ 
denote the projection map to the Hilbert scheme.
Let $X^n\subset \pp{n+1}$ be a hypersurface with a nondegenerate Gauss map (\S\ref{sec:gauss}); equivalently the Fubini quadric is  of maximal rank  at general points.  Let $x\in X_{\tgeneral}$,
and choose a local first-order adapted framing  $e$ on an open set $U \subset X$, containing $x$ as in Proposition
\ref{easyframe}.
Let $p$ denote the Hilbert polynomial 
of $\tZeros(\tilde\phi_{d,e}(x))$. The polynomial $p$ is independent of our choices of $x$ and $e$.
Consider the map
$$
  \hat \phi_{d,e} = \pi_{p}\circ \tilde\phi_{d,e}: U\ra \tHilb_{p }  \, .
$$
The map $ \hat \phi_{d,e}$ depends on our choice of framing.  (Recall that our identification of $T_xX$ with the fixed vector space $T$ is made via the frame $e$.)  To remove this ambiguity, we 
would like to quotient the image in  $\tHilb_{p}$ by the
action of $GL(T)$.  Unfortunately the quotient of 
$\tHilb_{p}$ by the action of $GL(T)$ is not a manifold.  As mentioned in the introduction, if $X$ is sufficiently general, there is a well-defined quotient
of $\tHilb_p^0$. In the case there is a nontrivial stabilizer $G$ of
$\hat\phi_{d,e}(x)\in \tHilb_{p}$,   
let $\cH^G\subset \tHilb_{p }$ denote the connected component
of $\tHilb_{p }$ with
  stabilizer $G$ containing $\tZeros(\tilde\phi_{d,e}(x))$. 
In particular $\cH^{\tId}=\tHilb_{p}^0$ and when $G = \tId$, we drop the superscript.
Note that we are making the implicit assumption that $G$ does not vary with the point $x$. 
\medskip

Set
$$
  \mpg \ = \ \cH^G/GL(T)
$$ 
and let 
$$
  \phi_{d,U} : U \to \mpg
$$
be the quotient map induced by $\hat\phi_{d,e}$. 
This extends to a well defined map
\be\label{gphidef}
\phi_d: X_{\tgeneral}\ra \mpg .
\ene
Let $\cS_d^{p,G}\subset \oplus_{\d=2}^dS^{\d}T^*$ denote
the inverse image $\hat\phi^{-1}_{d,e}(\cH^G)$ and let $\xpg$ and $\ppg$ denote the
images of the bundles $\bold\Xi_d$ and $\bold\Phi_d$ (Definition \ref{xiphidef}), restricted to $\cS_d^{p,G}$,
and pushed down to  $\mpg$.  Theorem \ref{phixthm} implies 
%----------------------------------------------------
\begin{theorem}\label{phixthmb}
Let $X^n\subset\pp{n+1}$ be a hypersurface of degree at least $d$, with
nondegenerate Gauss map. Let $x\in X_{\tgeneral}$ and let $p$ denote
the Hilbert polynomial determined by the Fubini forms of degree $\le d$.
Let $G\subseteq GL(T)$ denote the stabilizer of $\tZeros(\phi_{d,e}(x))$ in the Hilbert scheme
$\tHilb_p$, which we assume to be independent of $x\in X_{\tgeneral}$, so the map \eqref{gphidef} is well defined.  
\begin{list}{\emph{(\alph{Lcnt})}}
{ \usecounter{Lcnt} \setlength{\leftmargin}{20pt}}
 \item Then $\gamma_{ \phi_{d }}(X_{\tgeneral})\subset \xpg$.
\item If $\tdeg(X)=d$, then 
$\gamma_{ \phi_{d }}(X_{\tgeneral})\subset \ppg$.
\end{list}
\end{theorem}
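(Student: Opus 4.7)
The plan is to deduce Theorem \ref{phixthmb} from Theorem \ref{phixthm} by pushing the Gauss plane through the factorization $\phi_{d,U} = q \circ \pi_{p} \circ \tilde\phi_{d,e}$, where $\pi_{p}: \cU_{p} \to \tHilb_{p}$ is the Hilbert scheme projection and $q: \cH^G \to \mpg$ is the quotient by $\tGL(T)$.  Under the standing hypothesis that $G$ is constant along $\hat\phi_{d,e}(U)$, the map $q$ is a smooth submersion near $\hat\phi_{d,e}(U)$, and $\pi_{p}$ is a submersion by the definition of $\cU_{p}$.  Because $\gamma_{\phi_d}$ is assumed defined on $X_{\tgeneral}$, the rank of $d\phi_d|_x$ is $n$; together with $d\phi_d = d(q\circ\pi_{p})\circ d\tilde\phi_{d,e}$ this forces $\trank(d\tilde\phi_{d,e}|_x)=n$, so the hypotheses of Theorem \ref{phixthm} are met and the chain rule yields
\[
  \gamma_{\phi_d}(x) \ = \ d(q\circ\pi_{p})_{\tilde\phi_{d,e}(x)}\bigl(\gamma_{\tilde\phi_{d,e}}(x)\bigr).
\]

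Next I would verify that $\xpg$ and $\ppg$ are honest sub-bundles of $\bold G(n,T\mpg)$, rather than merely set-theoretic images of $\bold\Xi_d|_{\cS_d^{p,G}}$ and $\bold\Phi_d|_{\cS_d^{p,G}}$ under $d(q\circ\pi_{p})$.  This reduces to checking that the constructions $\bold P \mapsto \bold\Xi_{\bold P}$ and $\bold P \mapsto \bold\Phi_{\bold P}$ of Definition \ref{xiphidef} are $\tGL(T)$-equivariant.  For $g \in \tGL(T)$ acting on $\oplus_\d S^\d T^*$, the identities $(gv)\lefthook(g\cdot S_j) = g\cdot(v\lefthook S_j)$ and $\mu_{g\cdot\bold P}(g\cdot\alpha) = g\cdot\mu_{\bold P}(\alpha)$ give $E_{\mu_{g\cdot\bold P}(g\cdot\alpha)} = g \cdot E_{\mu_{\bold P}(\alpha)}$; the fact that $g$ permutes the ideal $I_{d+1}(\bold P)$ handles $\bold\Phi$ analogously.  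Hence $\bold\Xi_d$ and $\bold\Phi_d$ are $\tGL(T)$-invariant subvarieties of the Grassmann bundle over $\cS_d^{p,G}$ and descend to well-defined sub-bundles $\xpg$ and $\ppg$ over $\mpg$.

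Combining the two ingredients finishes the proof.  By Theorem \ref{phixthm}.a, $\gamma_{\tilde\phi_{d,e}}(x) \in \bold\Xi_{\tilde\phi_{d,e}(x)}$; applying the differential $d(q\circ\pi_{p})$ and the defining property of $\xpg$ places $\gamma_{\phi_d}(x)$ in $\xpg_{\phi_d(x)}$, which is Part (a).  Part (b) is obtained verbatim by substituting Theorem \ref{phixthm}.b, $\bold\Phi$, and $\ppg$ for Theorem \ref{phixthm}.a, $\bold\Xi$, and $\xpg$.

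The main obstacle is the $\tGL(T)$-equivariance check and the accompanying verification that the push-forwards $\xpg$ and $\ppg$ descend to well-defined sub-bundles of $\bold G(n,T\mpg)$; beyond this point, the argument is a formal application of the chain rule to Theorem \ref{phixthm}.
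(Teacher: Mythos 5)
Your proposal follows the paper's argument, which is essentially a one-line appeal to Theorem \ref{phixthm}: the paper defines $\xpg$ and $\ppg$ precisely as the push-downs of $\bold\Xi_d$ and $\bold\Phi_d$ restricted to $\cS_d^{p,G}$, so parts (a) and (b) are exactly the chain-rule computation $\gamma_{\phi_d}(x)=\td(q\circ\pi_p)(\gamma_{\tilde\phi_{d,e}}(x))$ that you carry out; your $\tGL(T)$-equivariance check is a worthwhile addition that the paper leaves implicit.

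One step of yours is incorrect as stated, though it does not sink the argument. You claim that because $\gamma_{\phi_d}$ is defined on $X_{\tgeneral}$, the rank of $\td\phi_d|_x$ is $n$, and from this you deduce $\trank(\td\tilde\phi_{d,e}|_x)=n$. But the Gauss map associated to a map is defined wherever the rank equals the \emph{generic} rank $r$, which may be strictly less than $n$ — see the paper's permanent example, where $\trank\td\phi_3=3<8=n$, and the determinant, where it is $0$. So "defined" does not imply rank $n$. The hypothesis you actually need, $\trank(\td\tilde\phi_{d,e}|_x)=n$, cannot be extracted from the target map and must instead be imported directly as in Theorem \ref{phixthm} (it holds for general $X$ by Proposition \ref{fullrank1}); once that is assumed, $\gamma_{\tilde\phi_{d,e}}(x)$ is an honest $n$-plane of the form $E_S$, and its image under $\td(q\circ\pi_p)$ — whatever its dimension — lies in $\xpg$ (respectively $\ppg$), which is all that is claimed.
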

%-----------------------------------------------------------------

%-----------------------------------------------------------------
\begin{theorem} \label{thm:deg=d}
Let $X^n\subset\pp{n+1}$ be either smooth or such that $X$ is $\pp 1$-uniruled and for a general line $\ell\subset X$, $\ell\cap X_{\tsing}=\emptyset$.  If
for some $d\leq n$,  $\gamma_{ \phi_{d }}(X_{\tgeneral})\subset \ppg$, then $\tdeg(X)=d$.
\end{theorem}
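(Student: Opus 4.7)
My plan is to descend the hypothesis $\gamma_{\phi_d}(X_{\tgeneral})\subset\ppg$ to a framing-level relation on Fubini forms, propagate that relation inductively to all orders $k\geq d+1$, conclude that every $v\in\widehat{\cC}_{d,x}$ is the tangent direction of a line in $X$ through $x$, and finally invoke the smoothness or uniruled hypothesis to force $\deg(X)=d$. Unwinding Definition \ref{xiphidef} and the differential computation \eqref{eqn:dtildephi} from the proof of Theorem \ref{phixthm}, the containment $\gamma_{\phi_d}(X_{\tgeneral})\subset\ppg$ lifts, in any local first-order framing $e$ as in Proposition \ref{easyframe}, to the statement that $F_{d+1,e}(x)\in I_{d+1}(F_{2,e}(x),\ldots,F_{d,e}(x))$ for every $x\in X_{\tgeneral}$; equivalently, $F_{d+1,e}(x)$ vanishes on $\widehat{\cC}_{d,x}$.

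The main technical step is an induction on $k\geq d+1$ showing $F_{k,e}(x)\in I_k(F_{2,e}(x),\ldots,F_{d,e}(x))$ at every general $x$. Working in the coordinate framing of \S\ref{sec:coords}, where $F_{k,e}(z)(v^k)=(-1)^k\partial_v^k f(z)$, write the inductive hypothesis as a polynomial identity $f^{(k)}(x)(v^k)=\sum_{j=2}^d f^{(j)}(x)(v^j)\,G_{k,j}(x,v)$, valid on a flat stratum where the ideals $I(F_{2,e}(x),\ldots,F_{d,e}(x))$ vary nicely. Applying the derivation $v^b\partial_{z^b}$ to both sides yields $f^{(k+1)}(x)(v^{k+1})=\sum_{j=2}^d\bigl[f^{(j+1)}(x)(v^{j+1})\,G_{k,j}+f^{(j)}(x)(v^j)\,\widetilde{G}_{k,j}\bigr]$; each summand is either a multiple of a generator $f^{(j)}$ for $2\leq j\leq d$, or of $f^{(d+1)}$ (which is in the ideal by the base case), so $F_{k+1,e}(x)$ lies in the ideal, closing the induction. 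By Taylor expansion of $t\mapsto f(x+tv)$, the conclusion $F_{k,e}(x)(v^k)=0$ for all $k\geq 2$ and $v\in\widehat{\cC}_{d,x}$ implies that the line through $x$ in the horizontal direction $v$ lies in $X$. Consequently $\widehat{\cC}_{d,x}\subseteq\widehat{\cC}_{d',x}$ with $d':=\deg X$; the reverse inclusion is automatic, giving $\widehat{\cC}_{d,x}=\widehat{\cC}_{d',x}$ at every general $x$.

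This equality forces $F_{d+1,e}(x),\ldots,F_{d',e}(x)\in I(F_{2,e}(x),\ldots,F_{d,e}(x))$ at every general $x$. If $d'>d$, then in particular $F_{d',e}(x)=(-1)^{d'}d'!\,P_{d'}$ is a fixed polynomial (constant in $x$) that must lie in the $x$-varying ideal $I(F_{2,e}(x),\ldots,F_{d,e}(x))$ for every general $x\in X_{\tgeneral}$. To obtain a contradiction, I would choose a line $\ell\subset X$ through a general point $x$ with $\ell\cap X_{\tsing}=\emptyset$ --- such a line exists either directly from the uniruled hypothesis or, in the smooth case, from the lines produced by the previous step together with smoothness --- adopt a frame adapted to $\ell$ (so that the only nontrivial Maurer--Cartan form along $\ell$ is $\omega^1_0$), and apply Proposition \ref{prop:F} along $\ell$. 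The way the ideals $I(F_{2,e}(y),\ldots,F_{d,e}(y))$ rotate as $y$ moves along $\ell$ forces $P_{d'}=0$, contradicting $d'>d$, so $\deg(X)=d$. I expect the main obstacle to be this final extraction: converting pointwise ideal-membership into outright vanishing of $F_{k,e}$ for $k>d$ requires tight control over the variation of the Fubini ideals, and the smoothness or uniruled-with-good-lines hypothesis is precisely what provides the well-behaved line $\ell$ needed to carry out this control; a secondary technical concern is the flatness argument underlying the global polynomial form of the inductive hypothesis, which relies on the constancy of the Hilbert polynomial $p$ on $X_{\tgeneral}$.
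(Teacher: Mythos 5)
Your opening move matches the paper's: the containment $\gamma_{\phi_d}(X_{\tgeneral})\subset \ppg$ unwinds, via Definition \ref{xiphidef} and \eqref{eqn:dtildephi}, to $F_{d+1,e}(x)\in I_{d+1}(F_{2,e}(x),\ldots,F_{d,e}(x))$, i.e. $\cC_{d,x}=\cC_{d+1,x}$ at general points. The paper then simply cites Theorem 2 of \cite{L_LPV} to conclude that $X$ is covered by lines tangent to $\cC_{d,x}$; your inductive propagation of ideal membership to all orders $k$ is a plausible self-contained sketch of that cited result (modulo the flatness of the coefficient functions $G_{k,j}$ in $x$, which you correctly flag). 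Up to this point you are essentially reproving the paper's first two sentences.

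The genuine gap is in your final step. Having produced a covering family of lines, the paper invokes the result (cited to \cite{LT_lines}) that if $\tdeg(X)>d$ and $B$ is an irreducible covering family of lines on $X$, then \emph{every} line of $B$ meets $X_{\tsing}$; the smoothness or general-line hypothesis then gives an immediate contradiction. This is a real theorem with geometric content --- in modern language it comes from the normal bundle sequence $0\to N_{\ell/X}\to \cO_\ell(1)^{n}\to\cO_\ell(\tdeg X)\to 0$ for a line $\ell$ along which $X$ is smooth, combined with the fact that the lines of $B$ through a general point form a family of dimension $n-d$, forcing $\tdeg(X)\le d$. Your substitute --- that the constant leading form $P_{d'}$ lying in the $x$-varying ideals $I(F_{2,e}(x),\ldots,F_{d,e}(x))$, together with ``the way the ideals rotate along $\ell$,'' forces $P_{d'}=0$ --- is an unproved assertion, and it is not even clearly the right target: nothing in pointwise ideal membership alone rules out a nonzero constant element of all the ideals (cones over high-degree varieties realize exactly this, and are excluded only by the singularity/nondegeneracy hypotheses you have not yet used in any quantitative way). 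Until this step is replaced by an actual argument (the normal-bundle computation above, or the frame-theoretic analysis of the Fubini forms restricted along $\ell$ carried out in the cited reference), the proof does not close.
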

%-----------------------------------------------------------------

\begin{proof}
If $X$ satisfies the hypotheses
of the theorem, then $F_{d+1}\subset I(\cC_{d,x})$ by Definition \ref{xiphidef}.  Therefore $\cC_{d,x}=\cC_{d+1,x}$. By Theorem 2 of \cite{L_LPV}, such $X$ are
$\pp 1$-uniruled.  But if $\tdeg(X)>d$, and $B$
is an irreducible variety of lines on $X$ that covers $X$, then
every line in $B$ intersects $X_{sing}$; see, e.g.,  \cite{LT_lines}.
\end{proof}

\noindent{\it Remark.}   
Compare Theorem  \ref{thm:deg=d} with Proposition 3.23 of \cite{L_CI}.  There, instead of imposing smoothness conditions on $X$, one requires that the generators of $I(\cC_{d,x})$ have no nontrivial syzygies, and concludes that $X$ has degree $d$ after $2d+1$ derivatives, as opposed to the $d+1$ derivatives of Theorem \ref{thm:deg=d}.

\medskip

Let $\pi=\pi_{p,G}: \cS^{p,G}_d \ra \cM^{p,G}_d$ denote the projection.
Set $\fm=\{ \bm = (m_{\t,\d}) \mid m_{\t,\d} \in S^{\t-\d}T^*\}$.  Then $\fm$ acts on $\oplus_{\d=2}^dS^{\d}T^*$ by
$$
  \bm.(P_2,\ldots, P_d) \ : = \ 
  \left( P_2 \, , \, P_3 + m_{3,2} P_2 \, , \, \ldots \, , \, 
  P_d + \textstyle\sum_{j=2}^{d-1} m_{d,j} P_j \right) \, .
$$
Additionally, the action of $\fgl(T)$ on $T$ induces an action on $S^\d T^*$ and thence on $\oplus_{\d=2}^dS^{\d}T^*$.  We denote the action by 
$X.(P_2 , \ldots , P_d) := (X.P_2 , \ldots , X.P_d)$.

%-----------------------------------------------------------------
\begin{proposition}\label{niceprop1}  With the notation as introduced above, we have
$$\tker \td\pi |_{\bold P}=(\fgl(T)+\fm).{\bold P}$$
\end{proposition}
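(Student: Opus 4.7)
The plan is to factor $\pi$ as $\rho\circ \pi_p$, where $\pi_p:\cS^{p,G}_d\to \cH^G$ sends $\bold P$ to the subscheme $\tZeros(\bold P)\in \tHilb_p$, and $\rho:\cH^G\to \cM^{p,G}_d$ is the geometric quotient by $GL(T)$.  Since $\pi_p$ is $GL(T)$-equivariant, $\td\pi_p(\fgl(T).\bold P)$ is tangent to the $GL(T)$-orbit through $\pi_p(\bold P)$, and this orbit tangent space coincides with $\tker\td\rho|_{\pi_p(\bold P)}$ by the definition of a geometric quotient.  Combining this with $\tker\td\pi|_{\bold P} = (\td\pi_p|_{\bold P})^{-1}(\tker\td\rho|_{\pi_p(\bold P)})$ gives
\[
  \tker\td\pi|_{\bold P} \ = \ \tker\td\pi_p|_{\bold P} \, + \, \fgl(T).\bold P,
\]
so the problem reduces to identifying $\tker\td\pi_p|_{\bold P}$ (modulo the $\fgl(T)$ contribution) with $\fm.\bold P$.

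The easy inclusion $\fm.\bold P\subseteq \tker\td\pi_p|_{\bold P}$ holds because each component $(\fm.\bold P)_k = \sum_{j<k}m_{k,j}P_j$ lies in the ideal $I = (P_2,\ldots,P_d)$, so replacing $P_k$ by $P_k + t(\fm.\bold P)_k$ leaves the ideal, and hence the subscheme, unchanged modulo $t^2$.  The inclusion $\fgl(T).\bold P\subseteq \tker\td\pi$ is immediate since $\pi$ factors through the $GL(T)$-quotient.  A small extra check, using equivariance, gives the reverse step: given $v\in\tker\td\pi|_{\bold P}$, the image $\td\pi_p(v)$ lies in $\tker\td\rho = \td\pi_p(\fgl(T).\bold P)$, so one can subtract off a representative $X.\bold P$ with $X\in\fgl(T)$ to land in $\tker\td\pi_p|_{\bold P}$.

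For the reverse identification, apply the standard description $T_{[Z]}\tHilb_p \cong \mathrm{Hom}_R(I,R/I)$, where $R = \oplus_{\d\ge 0}S^\d T^*$ is the polynomial ring on $T$.  The differential $\td\pi_p|_{\bold P}$ sends $(Q_2,\ldots,Q_d)$ to the homomorphism $P_k\mapsto Q_k\tmod I$, so $v = (Q_2,\ldots,Q_d) \in \tker\td\pi_p|_{\bold P}$ iff each $Q_k\in I$.  Writing $Q_k = \sum_{j\le k} n_{k,j}P_j$ and matching homogeneous degrees forces $n_{k,j}\in S^{k-j}T^*$; the strict lower-triangular contributions ($j<k$) are exactly $\fm.\bold P$.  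The main obstacle I expect is handling the diagonal terms ($j=k$, with $n_{k,k}\in\BC$): these are infinitesimal rescalings of individual $P_k$'s, and one must verify they lie in $\fgl(T).\bold P + \fm.\bold P$ by combining the center of $\fgl(T)$ (providing simultaneous scaling $P_\d\mapsto-\d c P_\d$) with off-diagonal corrections, using that $\bold P$ sits in the stratum $\cS^{p,G}_d$ with prescribed stabilizer so that the discrepancy between simultaneous and individual scalings can be absorbed.  Verifying this accounting cleanly, and confirming that the Hilbert scheme tangent space formula applies at the stratum in question, is the most delicate step.
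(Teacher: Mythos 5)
Your overall strategy coincides with the paper's: factor $\pi$ as $\rho\circ\pi_p$ through the Hilbert scheme, identify $\tker \td\rho$ with the tangent space to the $GL(T)$-orbit, and identify $\tker\td\pi_p$ with the tangent space to the fiber of $\pi_p$. The paper's proof simply asserts that the fiber of $\cS^{p,G}_d\to\cH^G$ over $I(\bold P)$ is the orbit $M\cdot\bold P$ and that the fiber of $\cH^G\to\cM^{p,G}_d$ is the $PGL(T)$-orbit, then checks that the two group actions can be interchanged so the kernels add. Your computation of $\tker\td\pi_p$ via $\mathrm{Hom}_R(I,R/I)$ is a more honest version of the first assertion, and it is precisely there that the discrepancy you flag appears.

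The gap you identify is real, and your proposed repair does not close it. For a complete intersection the degree-$k$ piece of the ideal is $I_k=\sum_{j\le k}S^{k-j}T^*\cdot P_j$, so $\tker\td\pi_p|_{\bold P}=\fm.\bold P\oplus\{(c_2P_2,\ldots,c_dP_d)\mid c_\d\in\bC\}$: the fiber of $\pi_p$ contains the independent rescalings of the generators, not just $M\cdot\bold P$. Of this $(d-1)$-dimensional scaling space, only the single weighted simultaneous rescaling $(2cP_2,\ldots,dcP_d)$ coming from the center of $\fgl(T)$ lies in $(\fgl(T)+\fm).\bold P$ for generic $\bold P$. For instance, to write $(P_2,0,\ldots,0)$ in that space one needs $X.P_2=P_2$, forcing $X\in-\tfrac12\tId+\fso(P_2)$, and then $X.P_3\equiv\tfrac32P_3 \ (\mathrm{mod}\ \fso(P_2).P_3)$ would have to lie in $T^*\cdot P_2$ — impossible for generic $P_3$. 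The stratification by the stabilizer $G$ of the subscheme does not help, since it constrains symmetries of $\tZeros(\bold P)$, not relations among rescalings of the individual generators. So there remain $d-2$ kernel directions unaccounted for, and the correct conclusion of your (correct) computation is that the right-hand side of the Proposition must be enlarged, e.g.\ by allowing diagonal entries $m_{\d,\d}\in S^0T^*=\bC$ in $\fm$. The paper's own proof has the same lacuna, hidden in the unproved claim that the fiber of $\pi_p$ is exactly $M\cdot\bold P$; your approach exposes it rather than resolving it.
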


\begin{proof} The fiber over $\pi(\bold P)$ of the projection $\cH^G\ra \cM^{p,G}_d$ is $PGL(T) \cdot I(\bold P)$. The fiber over $\pi_p(\bold P)=I(\bold P)$ of the projection from
$\cS_d^{p,G}\ra \cH^G$ is $M\cdot\bold P$, where $M$ is the Lie group associated to the Lie algebra $\fm$. 

Next, note that given $X\in \fgl(T)$ and $\bm\in \fm$ there exists
$X'\in \fgl(T)$, $\bm'\in \fm$ such that $X.\bm.\bold P=\bm'.X'.\bold P$.  It now follows that the fiber of the differential $\td \pi$ is the sum of the $\fgl(T)$ and $\fm$ actions. 
\end{proof}

%-----------------------------------------------------------------
\begin{proposition}\label{niceprop2} 
If $d\geq 4$ and $\bold P\in \oplus_{\d=2}^dS^{\d}T^*$ is sufficiently
general as described in the  proof, then for all $E\in \Xi_{\bold P}$,
\be\label{xieqn}
  E \cap \tker(\td\pi|_{\bold P})=0.
\ene
\end{proposition}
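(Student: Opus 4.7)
The plan is to use Proposition~\ref{niceprop1} to translate \eqref{xieqn} into a system of linear equations in $\bigoplus_{k=2}^{d-1}S^kT^*$, reduce the problem to the subsystem involving only the $S^2$- and $S^3$-components, and then conclude by a dimension count together with a specialization argument.

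An element $E\in\bold\Xi_{\bold P}$ is parametrized by some $\alpha\in S^{d+1}T^*$, and its points are $e(v)=(v\lefthook P_3,\ldots,v\lefthook P_d,v\lefthook\alpha)$ for $v\in T$. By Proposition~\ref{niceprop1}, $e(v)\in\tker(\td\pi|_{\bold P})$ iff $e(v)=X.\bold P+\bm.\bold P$ for some $(X,\bm)\in\fgl(T)\op\fm$. Equating components yields equations in $S^kT^*$ for $k=2,\ldots,d$. The $S^d$-equation contains $\alpha$ on the left, and since $v\lefthook\colon S^{d+1}T^*\to S^dT^*$ is surjective for $v\neq 0$ it can always be solved for $\alpha$; meanwhile $v=0$ forces $e(v)=0$. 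Hence \eqref{xieqn} for every $E\in\bold\Xi_{\bold P}$ reduces to showing that the system
$$
  X.P_k+\sum_{j=2}^{k-1}m_{k,j}P_j \ = \ v\lefthook P_{k+1},\qquad k=2,\ldots,d-1,
$$
has no solution with $v\neq 0$. Further, the $m_{k,j}$ with $k\geq 4$ appear only in equations with $k\geq 4$, so it is enough to prove that the $k=2,3$ equations alone already force $v=0$ for generic $\bold P$; this is precisely the role of the hypothesis $d\geq 4$.

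Assume $P_2$ is non-degenerate. Then $X\mapsto X.P_2$ is surjective with kernel $\fo(P_2)$ of dimension $\binom{n}{2}$, so the first equation determines $X=X_0(v)+Y$ with $Y\in\fo(P_2)$ and $X_0$ linear in $v$. Substituting into the second equation, solvability becomes $L\cap M\neq 0$ in $S^3T^*$, where $L:=\{v\lefthook P_4-X_0(v).P_3\mid v\in T\}$ and $M:=\fo(P_2).P_3+T^*\cdot P_2$. Generically $\dim L=n$ and $\dim M\leq\binom{n}{2}+n$, so $\dim L+\dim M\leq n(n+3)/2$, strictly less than $\dim S^3T^*=\binom{n+2}{3}$ for $n\geq 3$; trivial intersection is then the expected behavior. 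To upgrade this to a Zariski open condition on $\bold P$, I would verify $L\cap M=0$ at the Fermat-like specialization $P_\d=\sum_i(y^i)^\d$, $2\leq\d\leq d$: a direct computation gives $X_0(v)=-\tfrac{3}{2}\,\mathrm{diag}(v)$ and $L=\mathrm{span}\{(y^i)^3\}$, after which matching the $(y^i)^2y^c$-coefficients ($i\neq c$) together with antisymmetry of $Y\in\fo(P_2)$ forces any would-be element of $L\cap M$ to vanish (this step uses $n\geq 3$). Lower semi-continuity of the appropriate rank in $\bold P$ then yields a non-empty Zariski open subset of $\bigoplus_{\d=2}^dS^\d T^*$ on which the conclusion holds.

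The main obstacle is the specialization computation: the dimension count only predicts generic triviality of $L\cap M$, so one has to verify explicitly that the $\fo(P_2)$-ambiguity in $X$ does not conspire with the remaining parameters to produce an unexpected element of $L\cap M$. The combinatorial structure of the Fermat-like $\bold P$ makes the bookkeeping tractable, so the essential technical work reduces to this one specialization check.
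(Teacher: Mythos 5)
Your proposal is correct and follows essentially the same route as the paper's proof: reduce to the $S^2$- and $S^3$-component equations, solve the first for $X$ modulo $\fo(P_2)$, and rule out a nonzero $v$ in the second by a dimension count in $S^3T^*$ for generic $\bold P$. Your version is in fact tighter than the paper's: by restricting $X$ to $X_0(v)+\fo(P_2)$ you obtain the bound $n+\binom{n}{2}+n<\binom{n+2}{3}$, valid for all $n\ge 3$ (the paper's cruder count $n+(n^2+n)$ against $\binom{n+2}{3}$ would fail for small $n$), and your Fermat specialization together with semicontinuity supplies the explicit genericity verification that the paper leaves as an assertion.
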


\begin{proof} 
Let $\bold P=(P_2\hd P_d)$ be such that $(P_2,P_3,P_4)$ is a generic triple. Any point of the linear space
$E$ is of the form  $(v\intprod P_3,v\intprod P_4\hd v\intprod P_d, v\intprod \a)$
for some $v\in T$ and $\a\in S^{d+1}T^*$.
A point of $\tker(\td\pi|_{\bold P})$ is of the form
$$
  \left( X.P_2 \, , \, X.P_3 + m_{3,2} P_2 \, , \, \ldots \, , \, 
  X.P_d + \textstyle\sum_{j=2}^d m_{d,j}P_j \right) \, .
$$
We can   find $X$ such that $X.P_2=v\intprod P_3$, but for generic triples $(P_2,P_3,P_4)$
the vector spaces $\{ v\intprod P_4\mid v\in T\}$, which is $n$-dimensional
and $X.P_3+m_{3,2}P_2$ which is less than $(n^2+n)$-dimensional will
have zero intersection in the $\binom{n+2}3$-dimensional $S^3T^*$.
\end{proof}

\noindent{\it Remark.} 
Note that assuming genericity of any of the $P_{\d}$ for $\d\geq 4$ would have been enough to conclude
\eqref{xieqn}.

\medskip

Combining Propositions \ref{niceprop1} and \ref{niceprop2}, we obtain

\begin{theorem}\label{forcedmove} For $d\geq 4$,
there is a non-empty Zariski open subset   $A_d\subset \cM_d$   such that
any $X^n\subset \pp{n+1}$ of degree at least $d$ with $x\in X_{\tgeneral}$ such that
$\phi_d(x)\in A_d$, must have $\trank(\td\phi_d|_x)=n$.
\end{theorem}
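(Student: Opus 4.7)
The plan is to unpack the two preceding propositions via the factorization $\phi_d=\pi\circ\tilde\phi_{d,e}$. Given $x\in X_{\tgeneral}$ with $\phi_d(x)\in A_d$, choose a local first-order adapted framing $e$ as in Proposition \ref{easyframe} on a neighborhood $U$ of $x$, set $\bold P=\tilde\phi_{d,e}(x)$, and apply the chain rule to obtain $\td\phi_d|_x=\td\pi|_{\bold P}\circ\td\tilde\phi_{d,e}|_x$. By formula \eqref{eqn:dtildephi}, the image $E$ of $\td\tilde\phi_{d,e}|_x$ is spanned by the vectors $(v\lefthook F_{3,e}(x),\ldots,v\lefthook F_{d+1,e}(x))$ for $v\in T$, so $E\in\bold\Xi_{\bold P}$ by Definition \ref{xiphidef}. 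It suffices to verify (i) $\dim E=n$ and (ii) $E\cap\tker(\td\pi|_{\bold P})=0$: both together give that $\td\pi|_{\bold P}$ restricted to $E$ is injective, whence $\trank(\td\phi_d|_x)=n$.

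Both (i) and (ii) are consequences of the genericity of the triple $(P_2,P_3,P_4)$ required by Proposition \ref{niceprop2}. For (i), this genericity ensures in particular that the contraction $v\mapsto v\lefthook P_3$ is injective, so the spanning vectors of $E$ are linearly independent and $\dim E=n$. For (ii), the conclusion is precisely Proposition \ref{niceprop2} applied to $\bold P$, combined with the identification of $\tker(\td\pi|_{\bold P})$ as $(\fgl(T)+\fm).\bold P$ provided by Proposition \ref{niceprop1}.

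To define $A_d$, let $\cV\subset\cS_d$ denote the Zariski open subset of tuples whose first three components $(P_2,P_3,P_4)$ satisfy the generic triple condition of Proposition \ref{niceprop2}. This condition is $GL(T)$-invariant: the $GL(T)$-action on each $S^\d T^*$ is equivariant, and the dimension counts driving Proposition \ref{niceprop2} depend only on the $GL(T)$-equivalence class of $(P_2,P_3,P_4)$. Hence $\cV$ descends to a Zariski open $A_d:=\pi_d(\cV)\subset\cM_d$, which is non-empty by Lemma \ref{lem:choice} or by the explicit family in \S\ref{ranknexample}. The principal (modest) obstacle is the bookkeeping: one must confirm that over $A_d$ the stabilizer in $GL(T)$ is trivial, so that $\pi$ is a submersion at $\bold P$ and the kernel description of Proposition \ref{niceprop1} applies without modification. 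This is the step where the passage from the framed picture of Theorem \ref{phixthm} to the moduli picture $\cM_d$ of Theorem \ref{phixthmb} requires some care, but it is automatic once one notes that a generic $(P_2,P_3,P_4)$ already has trivial $GL(T)$-stabilizer.
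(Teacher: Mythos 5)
Your proposal is correct and follows essentially the same route as the paper: the paper's entire proof of Theorem \ref{forcedmove} is the one-line ``combining Propositions \ref{niceprop1} and \ref{niceprop2},'' and your write-up supplies exactly the chain-rule bookkeeping (the factorization $\phi_d=\pi\circ\tilde\phi_{d,e}$, the identification of the image of $\td\tilde\phi_{d,e}|_x$ with an element of $\bold\Xi_{\bold P}$ via Theorem \ref{phixthm}.a, and the genericity of $(P_2,P_3,P_4)$ defining $A_d$) that this combination presupposes.
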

 
\medskip

\noindent{\it Remark.}  More generally, we have 
\begin{equation}\label{eqn:rank}
  \trank \td\tilde\phi_{d,e}(x) \ - \ \trank \td\phi_d(x) \ = \ 
  \tdim \left( \td\tilde\phi_{d,e}(x)(T) \, \cap \,  \left(
                \mathfrak{gl}(T) \, + \, \fm \right).\tilde\phi_{d,e}(x) \right) \, .
\end{equation}

\medskip

\noindent{\bf Example \ref{ranknexample} continued.}
Generically, the stabilizer of $p_d$ in $GL(T)$ will be trivial so that in 
%\eqref{eqn:rank}
\eqref{xieqn} we need only consider the intersection of $\langle p_{abc}\, y^b y^c \rangle_{a=1}^n$ with $\langle ( p_{ab} - p_{abc} z^c ) \, y^a y^b \rangle$.  Generically, the intersection will be trivial, so that the rank of $\td\phi_{3}$ is maximal. 
 
\medskip

\noindent{\it Remark.}  Had we normalized $F_2$ to be constant, $F_3$ would have been forced to vary.
See \S\ref{secondosect}.

%Colleen's old version at end of file

% ----------------------------------
\subsection{Sharper theorems}\label{sec:sharper}
% ----------------------------------
Instead of descending all the way to $\mpg$, one may obtain similar results by descending just to $(\oplus_{\d=2}^dS^{\d}T^*)/H$ as we have a canonical map
$\overline\phi_d: X_{\tgeneral}\ra (\oplus_{\d=2}^dS^{\d}T^*)/H$. Here $H$ is the unipotent group preserving the flag $\hat x\subset \hat T_xX\subset V$.

\subsection{Second order adapted frames}\label{secondosect}

One could choose to work with second order adapted frames where $F_{2,X}$ is normalized to be a fixed quadratic form $Q$  (e.g. $Q=y_1{}^2+\cdots +y_n{}^2$), and  consider the map $\tilde\phi_{d,e}'$ from such a framing 
to $\oplus_{\d=3}^{d}S^{\d}T^*$.  In this case the derivative $d\tilde\phi_{d,e}'(z)$ is \emph{shifted}  in the following sense.  Contract   $P_{\d}\ot P_3$ via (the dual
quadric to) $Q$ to obtain an element of $S^{\d-1}T^*\ot S^2T^*$;
then symmetrize to get an element $\xi_{\d}(\bold P)\in S^{\d+1}T^*$. 
We find that
\begin{eqnarray*}
  \td \tilde \phi_{d,e}'(z) & = & (0,  \td F_{3,e}(z) , \ldots , \td F_{d,e}(z) ) \\
  & = & - ( F_{3,e}(z)+\xi_3(\bold P), \ldots , F_{d+1,e}(z) +\xi_d(\bold P)) \, .
\end{eqnarray*}

%-------------------------------------------------
\section{Cubic examples}\label{sec:examples}
%-------------------------------------------------

%-------------------------------------------------
\subsection{Fermat cubic}
%-------------------------------------------------
Given linear coordinates $\overline z = (z^0 , \ldots , z^\N ) \in \bC^{\N+1}$, consider the Fermat cubic
$$
  F(\overline z) \ = \ (z^0)^3 + \cdots + (z^\N)^3 \, .
$$
It is easy to see that $x = [ \N^{1/3} : -1 : \cdots : -1 ]$ is a smooth point of the hypersurface  $X = \{ F = 0 \}$.  In the cases $3 \le n \le 6$ we used Maple to confirm that the Fubini quadric $F_2$ is nondegenerate at $x$, and that $\td \phi_3$ has maximal rank at $x$.

%-------------------------------------------------
\subsection{The determinant}\label{sec:det}
%-------------------------------------------------
In contrast to the hypersurfaces discussed in this article, if $X$ is quasi-homogeneous, i.e.,
the Zariski closure in $\BP V$ of an orbit of a group $G$ acting linearly on $V$,
then its differential invariants will be constant on a Zariski open subset. More generally,
if it is a $G$-variety for some group $G$, then its differential invariants will
be constant along $G$-orbits. For example, consider the 
the cubic hypersurface $\tDet(3) \subset \bP^8$.  Given coordinates
$$
  w \ = \ \left( \begin{array}{ccc}
     w^0 & w^3 & w^8 \\
    w^6 & w^1 & w^4 \\
    w^5 & w^7 & w^2 
  \end{array} \right)  \ \in \ \bC^9 \, ,
$$
$\tDet(3)$ is given by the equation
$$
  F(w) \ = \ w^0 w^1 w^2 \ + \ w^3 w^4 w^5 \ + \ w^6 w^7 w^8 \ 
         - \ w^0 w^4 w^7 \ - \ w^2 w^3 w^6 \ - \ w^1 w^5 w^8  \, . 
$$
We will consider an open coordinate neighborhood of the smooth point $x = [1:1:0:\cdots:0] \in \tDet(3)$.  

Note that $\tDet(3)$ is preserved by the action of $GL(9)$ on $\bP^8$ and that $x$, corresponding to a rank two matrix, lies in the maximal orbit.  Hence we expect $\phi_3$ to be constant.
Indeed it is possible to construct a local, first-order adapted framing $e'$ in a neighborhood of $x$ with respect to which the Fubini invariants and thus
$\tilde\phi_{3,e'}$ are constant.  

However, the framing $e'$ is not of the form constructed in \S\ref{sec:coords}.
  Given a framing $e$ of the type constructed \S\ref{sec:coords}, $\td\tilde\phi_{3,e}$ will have maximal rank 7.  However,
$\td\tilde\phi_{3,e}(x)(T) \subset I^d(x) + \mathfrak{ gl}(T).\tilde\phi_{3,e}(x)$, and \eqref{eqn:rank} 
%\eqref{xieqn} 
yields $\trank d\phi_{3,e}(x) = 0$.

The first three differential invariants in such a framing are
\begin{eqnarray*}
  F_{2,e}(x) & = & 2 \left( y^3 y^6 + y^5 y^7 \right) \, , \\
  F_{3,e}(x) & = & 6 \left( y^1 y^3 y^6 - y^1 y^5 y^7 
                   + y^2 y^5 y^6 + y^3 y^4 y^7 \right) \, , \\
  F_{4,e}(x) & = & 12 \left( (y^1)^2 + y^2 y^4 \right) 
               \, F_{2,e}(z) \, . % \\
%  F_{5,e}(z) & = & 20 \left( (y^1)^2 + y^2 y^4 \right)
%               \, F_3(z) \, .
\end{eqnarray*}
Note also that the quadric $F_{2,e}$ is singular.  This requires that we alter the moduli space that $\phi_3$ maps into. 

%-------------------------------------------------
\subsection{The permanent}
%-------------------------------------------------

Maintaining the coordinates of \S\ref{sec:det}, the equation of the permanent $\tPrm(3)$ is
$$
  F(w) \ = \ w^0 w^1 w^2 \ + \ w^3 w^4 w^5 \ + \ w^6 w^7 w^8 \ 
         + \ w^0 w^4 w^7 \ + \ w^2 w^3 w^6 \ + \ w^1 w^5 w^8  \, . 
$$
While the permanent is not invariant under the action of $GL(9)$ on $\bP^8$, it is invariant under left and right multiplication by diagonal matrices (with a one dimensional stabilizer) and a permutation group.  So we must have $\trank \td\phi_{3} \leq 8-(3+3-1)=3$.

If the entries $w^5, w^6, w^0, w^3, w^8$ in the first column and row of the matrix $w$ are nonzero, then they can be normalized to 1 by the group action.  We selected eight normalized points on the permanent, and found that that computations with \eqref{eqn:rank} (aided by Maple) yield $\trank \td\phi_3(x) = 3$ in each case.  Thus $\trank \td\phi_3(x) = 3$ at a general point $x$ on the permanent.

%-------------------------------------------------
\bibliography{AG}
\bibliographystyle{alpha}
%-------------------------------------------------
\end{document}